\newcommand{\Ric}{\mathrm{Ric}}
\newcommand{\e}{\epsilon}
\renewcommand{\S}{\Sigma}
\newcommand{\n}{\nabla}
\renewcommand{\L}{\mathcal{L}}
\renewcommand{\o}{\omega}
\newcommand{\tr}{\mathrm{tr}}
\renewcommand{\a}{\alpha}
\renewcommand{\b}{\beta}
\renewcommand{\d}{\partial}
\renewcommand{\div}{\mathrm{div}}
\newcommand{\riem}{\mathrm{Riem}}
\renewcommand{\H}{\mathcal H}
\newcommand{\Riem}{\mathrm{Riem}}
\theoremstyle{plain}
\newtheorem{thm}{Theorem}[section]
\newtheorem{lemma}{Lemma}[section]
\newtheorem{cor}{Corollary}[section]
\newtheorem{definition}{Definition}[section]
\newtheorem{remark}{Remark}[section]
\newcommand{\R}[0]{\mathbb{R}}							
\newcommand{\N}[0]{\mathbb{N}}							
\DeclareFontFamily{OT1}{rsfs}{} \DeclareFontShape{OT1}{rsfs}{m}{n}{
	<-7> rsfs5 <7-10> rsfs7 <10-> rsfs10}{}
\DeclareMathAlphabet{\mycal}{OT1}{rsfs}{m}{n}
\title[Compact Cauchy horizons of constant non-zero surface gravity]
 {Symmetries of vacuum spacetimes with a compact Cauchy horizon of constant non-zero surface gravity}
\author{Oliver Petersen\,${}^{1}$}
\address[1]{Department of Mathematics, KTH Stockholm, Lindstedtsvägen 25, 11428 Stockholm, Sweden}
\email[1]{oliverlp@kth.se}
\author{Istv\'an R\'acz\,${}^{2}$}
\address[2]{
Wigner RCP \\
H-1121 Budapest \\
Konkoly Thege Mikl\'{o}s \'{u}t 29-33 \\
Hungary}
\email[2]{racz.istvan@wigner.hu}
\subjclass{Primary 83C75; Secondary 58Z05}
\keywords{compact Cauchy horizon, vacuum spacetime, Killing vector field}
\begin{document}
	\hbadness=100000
	\vbadness=100000

\begin{abstract}
We prove that any smooth vacuum spacetime containing a compact Cauchy horizon with surface gravity that can be normalised to a non-zero constant admits a Killing vector field. 
This proves a conjecture by Moncrief and Isenberg from 1983 under the assumption on the surface gravity and generalises previous results due to Moncrief-Isenberg and Friedrich-Rácz-Wald, where the generators of the Cauchy horizon were closed or densely filled a $2$-torus. 
Consequently, the maximal globally hyperbolic vacuum development of generic initial data cannot be extended across a compact Cauchy horizon with surface gravity that can be normalised to a non-zero constant.
Our result supports, thereby, the validity of the strong cosmic censorship conjecture in the considered special case.
The proof consists of two main steps.
First, we show that the Killing equation can be solved up to infinite order at the Cauchy horizon. 
Second, by applying a recent result of the first author on wave equations with initial data on a compact Cauchy horizon, we show that this Killing vector field extends to the globally hyperbolic region.
\end{abstract}

\maketitle

\tableofcontents
\begin{sloppypar}

\section{Introduction}
Penrose's strong cosmic censorship conjecture says that the maximal globally hyperbolic vacuum developments of generic initial data cannot be extended to a larger vacuum spacetime \cite{Penrose1979, Penrose1994, wald, Reall2018}.  
In spite of its importance, this intriguing conjecture is far from being proved. 
Indeed, the strong cosmic censorship conjecture always receives a foremost place in the list of the most important unresolved issues in Einstein's theory of gravity. 

If a maximal globally hyperbolic development was a proper open subset of a larger spacetime it could be extended across a Cauchy horizon. 
It is therefore of crucial importance to understand if the existence of a Cauchy horizon in a vacuum spacetime implies some restrictions on the geometry. Exactly this type of question was raised by Vince Moncrief---emanating from his comprehensive investigations of various cosmological spacetimes during the early $1980$'s  \cite{Moncrief1981a,Moncrief1982,Moncrief_FM_1980,Moncrief1981b}---in connection with spacetimes admitting a compact Cauchy horizon. 
He proposed that vacuum spacetimes with a compact Cauchy horizon necessarily admits a non-trivial Killing vector field in the globally hyperbolic region.
As the existence of a Killing vector field is a \emph{non-generic} property, such a statement would imply that spacetimes with compact Cauchy horizons necessarily are non-generic and thereby support the strong cosmic censorship conjecture in the considered special case.

The first remarkable step in applying this idea was made by Moncrief and Isenberg by proving the existence of a non-trivial Killing symmetry in \emph{analytic} electrovacuum spacetimes of dimension $4$, admitting a compact Cauchy horizon ruled by \emph{closed} generators \cite{MoncriefIsenberg1983}. 
One important step in the proof was to show that the surface gravity of any compact Cauchy horizon with closed generators could be normalised to zero (the degenerate case) or to a non-zero constant (the non-degenerate case). 
Moncrief and Isenberg conjectured in \cite{MoncriefIsenberg1983} that their results should hold without assuming analyticity and that the generators are closed.
Analyticity was later relaxed in the non-degenreate case by applying a combination of spacetime extensions and the characteristic initial value problem by Friedrich, R\'acz and Wald \cite{FRW1998}. 
Essentially the same techniques were also used in \cite{Racz2000} to generalise the proof to various coupled gravity matter systems.

Moncrief and Isenberg generalised their result in \cite{MoncriefIsenberg1983} to higher dimensions in \cite{MoncriefIsenberg2008} by proving the existence of a non-trivial Killing symmetry for higher dimensional analytic electrovacuum spacetimes admitting a compact Cauchy horizon with closed generators and for higher dimensional analytic electrovacuum stationary black hole spacetimes.
Their argument does not apply, in general, if the generators are not closed. 
A similar investigation for smooth higher dimensional electrovacuum black hole spacetimes, generalising the result in \cite{FRW1998} to higher dimensions, was done by Hollands, Ishibashi and Wald \cite{HIW2007}.

It is important to emphasise that in all the aforementioned results on compact Cauchy horizons, the generators were closed or densely filled a $2$-torus.
The purpose of the present paper is to provide generalisations of all the earlier results by removing both the analyticity assumption on the spacetime and the assumptions about the structure on the generators.
The only assumption in our work is that surface gravity can be normalised to a non-zero constant.
After the present paper appeared as a preprint, Bustamente and Reiris showed in \cite{BR2021} (see also the work by Gurriaran and Minguzzi in \cite{GM2021}) that if at least one generator of a compact Cauchy horizon in a vacuum spcaetime is incomplete, then the surface gravity can in fact be chosen to be a non-zero constant (which in turn implies that all generators are incomplete).
Our assumptions here therefore only exclude potential compact Cauchy horizons in vacuum spacetimes where all generators are complete.
However, to date no such example is known.

The main difficulty in dropping the assumption that the compact Cauchy horizon is ruled by closed generators or generators densely filling a $2$-torus is that each of the previously applied arguments rest on the use of Gaussian null coordinate systems. 
These coordinates are supposed to be defined in a neighbourhood of the Cauchy horizon (or---as they are applied in \cite{FRW1998,Racz2000}---in a neighbourhood of the universal cover of a subset of the Cauchy horizon). 
These Gaussian null coordinates have to be well-defined along the null generators, which cannot be guaranteed when some of the generators are non-closed. 
In order to avoid these difficulties we base our argument on a coordinate free framework introduced in \cite{Petersen2018}. 

Our proofs---besides relying heavily on the new result on wave equations with initial data on compact Cauchy horizons in \cite{Petersen2018}---are based on the following two fundamental new observations. 
First, a pair of coupled wave equations is used, relating a vector field to the Lie derivatives of the metric and the Ricci tensor with respect to that vector field. 
(A detailed derivation of these relations is given in a separate appendix, see in particular Lemma A.1). 
These simple equations allow to avoid the use of coordinate expressions for the Ricci curvature and its Lie derivatives, as used in \cite[Section~II.C]{MoncriefIsenberg1983}, and thus prove to be the main ingredient of our proof.
Second, in verifying that the Killing equation can be solved up to any order at the Cauchy horizon---using the aforementioned coupled wave equations---a first order linear and homogeneous ODE along the generators is derived for the norm of some specific components of the transverse (to the Cauchy horizon) derivatives of the Killing equation.
An important step in the proof is to use this ODE to show that the norm and, in turn, the pertinent components vanish.
The key point here---that allows to apply our argument to an arbitrary generator of the compact Cauchy horizon---is that a \emph{global} maximum principle can be applied to this ODE. 
In the case when the generators are closed, Moncrief and Isenberg used a corresponding maximum principle along each generator \cite{MoncriefIsenberg1983}. 
While the maximum principle does not apply to functions along non-closed generators, it applies to functions defined globally on the compact Cauchy horizon (see Lemma \ref{le: max_principle}).

\smallskip

Our results apply to smooth spacetimes $(M,g)$, i.e.\ connected time-oriented Lorentzian manifolds, of dimension $n+1 \geq 2$. The signature of the Lorentzian metric $g$ is fixed to be $(-, +, \dots, +)$. 
Consider now a closed acausal topological hypersurface $\S$ in $M$ (we do not require $\S$ to be compact), its Cauchy development $D(\S)$ is a globally hyperbolic submanifold in $(M, g)$. 
The boundary $\d D(\S)$ of $D(\S)$ is given by the disjoint union
\[
\d D(\S) = \H_+ \sqcup \H_-\,,
\]
where $\H_{\pm} := \overline{D^\pm(\S)} \backslash D^\pm(\S)$ denote the future and past Cauchy horizon, respectively.  
A lot is known about Cauchy horizons (see, e.g.~\cite[Chap. 6,8]{HawkingEllis1973} and \cite[Chap. 14]{O'Neill1983}). 
In particular, a Cauchy horizon is a lightlike Lipschitz hypersurface.
Denote by $\H$ the past or future Cauchy horizon of $\S$, and assume that $\H$ is non-empty and smooth.
Recall that \footnote{
Time-orientability of $M$ implies the existence of a nowhere vanishing time-like vector field $T$ on $M$.
Since $T$ is transversal to $\H$, we can define a one-form field $\b$ on a neighbourhood of $\H$ satisfying $\b(T) = 1$ and $\b(X) = 0$ for all $X \in T\H$. 
Then the vector field $V$ along $\H$, defined by $g(V, \cdot)|_\H = \b|_{\H}$, is a nowhere vanishing lightlike vector field tangent to  $\H$.}
there is a nowhere vanishing lightlike vector field $V$, tangent to $\H$, and a smooth function $\kappa$ such that
\[
	\n_VV = \kappa V\,.
\]

\begin{definition} \label{def: non-zero_constant_surface_gravity}
We say that the surface gravity can be normalised to a non-zero constant if there is a smooth nowhere vanishing lightlike vector field $V$ tangent to $\H$ such that 
\[
	\n_V V = \kappa V
\]
on $\H$ for some \bf{non-zero constant} $\kappa$.
\end{definition}

The integral curves of the lightlike vector field $V$---these are null geodesics---are called the \emph{generators} of $\H$.

\begin{remark}
If the surface gravity is constant and non-zero, then all generators are complete in one direction and incomplete in the other direction.
After the current paper appeared, the following remarkable converse statement was shown in \cite{BR2021} and \cite{GM2021}: If $\H$ is a smooth compact Cauchy horizon in a vacuum spacetime, containing one incomplete generator, then the surface gravity can be normalized to a non-zero constant.
\end{remark}

The first main result of the present paper is the following theorem.

\begin{thm}[Existence of an asymptotic Killing vector field] \label{thm: n = n}
Let $(M, g)$ be a spacetime containing a closed acausal topological hypersurface $\S$ and let $\H$ denote the past or future Cauchy horizon of $\S$.
Assume that $\H$ is compact, smooth and totally geodesic, and that the surface gravity can be normalised to a non-zero constant.
Assume that  $m \in \N$ and that 
\begin{equation}\label{eq: k-assumption}
	\n^k \Ric|_\H = 0
\end{equation}
for all $k \leq m$. 
Then there is a smooth non-trivial vector field $W$ on $\H \cup D(\S)$ such that
\begin{equation}\label{eq: n = n}
	\n^k \L_W g|_\H = 0\,,
\end{equation}
for all $k \leq m$.
Moreover, if \eqref{eq: k-assumption} holds for all $k \in \N_0$, then \eqref{eq: n = n} also holds for all $k \in \N_0$.
\end{thm}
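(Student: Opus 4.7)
The plan is to build $W$ by first prescribing its infinite jet transverse to $\H$ order by order, killing the obstructions to $\n^k \L_W g|_\H = 0$ inductively, and then realising that jet as an actual smooth vector field on $\H \sqcup D(\S)$. Throughout I would work with the coupled wave system from Lemma \ref{base} for $(W, \L_W g)$, which expresses $\Box W$ and $\Box \L_W g$ solely in terms of $W$, $\L_W g$, and Lie derivatives of $g$ and $\Ric$. The hypothesis \eqref{eq: k-assumption} is precisely what will allow the curvature contributions to be discarded when restricting to $\H$.

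For the base case $k = 0$, set $W|_\H := V$, where $V$ is the horizon generator normalised so that $\n_V V = V$. A short computation, using that $\H$ is totally geodesic and that $V$ is null and tangent to $\H$ with $g(V,\cdot)$ vanishing on $T\H$, shows $\L_V g|_\H = 0$. For the inductive step, assume the jet of $W$ transverse to $\H$ has been chosen up to order $k - 1$ so that $\n^j \L_W g|_\H = 0$ for all $j < k$. Differentiating the wave equation for $\L_W g$ a total of $k - 1$ times, restricting to $\H$, and using \eqref{eq: k-assumption} to throw away the curvature terms, I expect to obtain a relation that splits into two pieces: one that can be solved by a free choice of the $k$-th transverse jet of $W$, and one that is a genuine constraint on components of $\n^k \L_W g|_\H$ not at the disposal of $W$. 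This would play the role, in a coordinate-free setting, of the Gaussian-null calculations of \cite[Section~II.C]{MoncriefIsenberg1983}.

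The main obstacle is to show that this constraint piece actually vanishes. Following the blueprint in the introduction, I would derive from the coupled wave system a first-order linear homogeneous ODE along the generators of $V$ for the squared norm of the obstructing tensor. In the case of closed generators this would be enough, since a one-dimensional maximum principle on each orbit forces the norm to vanish, as in \cite{MoncriefIsenberg1983, FRW1998}. When the generators need not close, that classical argument breaks down. The key new input is to regard the squared norm as a globally defined smooth function on the compact $\H$ and to apply the global maximum principle of Lemma \ref{le: max_principle}; this step is where both compactness of $\H$ and the normalisation $\kappa = 1$ enter critically, and it is what genuinely goes beyond the earlier works.

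Once the formal jet has been built, I would realise it as a smooth vector field on $\H \sqcup D(\S)$ by solving the wave equation for $W$ from Lemma \ref{base} with initial data on $\H$ encoding the constructed jet, invoking the existence theorem for wave equations with initial data on a compact Cauchy horizon from \cite{Petersen2018}. Non-triviality of $W$ is automatic from $W|_\H = V \neq 0$. The statement for $m = \infty$ follows by letting the induction run indefinitely, using a Borel-type argument if necessary to produce a smooth vector field whose infinite jet on $\H$ realises the formal one.
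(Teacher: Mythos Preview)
Your high-level strategy—induction on the transverse order via the coupled wave equations of Lemma \ref{base}, with the obstruction killed by the global maximum principle of Lemma \ref{le: max_principle}—is the paper's strategy. But you over-engineer one part and under-specify another.

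The paper does \emph{not} adjust the transverse jet of $W$ order by order. It fixes $W = V$ once and for all as the Lie drag of the horizon generator along the null geodesic field $\d_t$ (so $[\d_t,V]=0$ on the collar $[0,\e)\times\H$), and the induction proves simultaneously that $(\n_t)^k\L_V g|_\H = 0$ and $(\n_t)^k\Box V|_\H = 0$. There is no ``free choice of the $k$-th transverse jet'': with this fixed $V$, the $(\d_t,\cdot)$ components of $(\n_t)^{k+1}\L_V g|_\H$ are automatically lower order precisely because $[\d_t,V]=0$ (Lemma \ref{le: the d_t component}), the $(V,\cdot)$ components follow from the second wave equation \eqref{eq: dabV_k} via Lemma \ref{le: k_derivative}, and only the $E\times E$ block is handled by the maximum principle. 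Carrying $(\n_t)^k\Box V|_\H = 0$ through the induction is essential to control the $\L_{\Box V}g$ term in \eqref{eq: dabLg_k}; your sketch does not make this explicit. Your base case is also incomplete: the $(\d_t,X)$ components of $\L_V g|_\H$ for $X\in E$ do not vanish merely from $\H$ totally geodesic and $V$ null—one needs $[V,X]|_\H\in E$ (Lemma \ref{le: preparations}), and that genuinely uses $\Ric(V,\cdot)|_\H = 0$.

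Finally, solving a wave equation or invoking Borel to realise the jet is unnecessary for Theorem \ref{thm: n = n}; that machinery belongs to Theorem \ref{thm: existence Killing field}. The Lie-dragged $V$ is already smooth on the collar, and any smooth extension of it to $D(\S)$ satisfies \eqref{eq: n = n}, since the condition depends only on the germ at $\H$.
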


Theorem \ref{thm: n = n} is proved in Section \ref{sec: asymptoticKilling}. It guarantees that there is a vector field satisfying the Killing equation up to any order. 
To prove the existence of a non-trivial Killing vector field in the globally hyperbolic region we have to propagate the asymptotic Killing field off the Cauchy horizon using wave equations.
That this can really be done is guaranteed by a recent result by the first author \cite[Thm.~1.6]{Petersen2018}. 
(In the analytic case the corresponding step is done by applying the Cauchy-Kovalewski theorem.)
We get the second main result of this paper:

\begin{thm}[Existence of a Killing vector field] \label{thm: existence Killing field}
Let $(M, g)$ be a Ricci-flat spacetime containing a closed acausal topological hypersurface $\S$ and let $\H$ denote the past or future Cauchy horizon of $\S$.
Assume that $\H$ is compact, smooth and totally geodesic, and that the surface gravity can be normalised to a non-zero constant.
Then there exists a smooth non-trivial Killing vector field $W$ on $\H \cup D(\S)$, i.e.
\[
	\L_W g = 0\,.
\]
$W$ is lightlike on $\H$ and spacelike in $D(\S)$ near $\H$, and any smooth extension of $W$ across $\H$ to the complement of $\overline{D(\S)}$ is timelike near $\H$.
\end{thm}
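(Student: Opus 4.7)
The plan is to obtain Theorem~\ref{thm: existence Killing field} by combining Theorem~\ref{thm: n = n} with the characteristic initial value result \cite[Thm.~1.6]{Petersen2018} for wave equations with data on a compact Cauchy horizon. First I would verify the hypotheses of Theorem~\ref{thm: n = n}. The condition $\n^k \Ric|_\H = 0$ for all $k \in \N$ holds trivially since $\Ric \equiv 0$, and the fact that $\H$ is totally geodesic in the non-degenerate vacuum case is a classical consequence of the null Raychaudhuri equation on a compact Killing horizon with non-zero surface gravity (presumably handled in a preceding section of the paper). Applying Theorem~\ref{thm: n = n} in its all-orders form then produces a smooth, non-trivial vector field $W$ on $\H \sqcup D(\S)$ such that $\n^k \L_W g|_\H = 0$ for every $k \in \N$.

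The heart of the argument is to upgrade this asymptotic statement to the genuine identity $\L_W g \equiv 0$ on $\H \sqcup D(\S)$. For this I would invoke the coupled wave equation system of Lemma~\ref{base}: for an arbitrary vector field $X$, the system expresses $\Box X$ and $\Box(\L_X g)$ in terms of $X$, $h := \L_X g$, $\Ric$, and $\L_X \Ric$, together with the Riemann curvature of $g$. In vacuum, $\Ric \equiv 0$ forces $\L_W \Ric \equiv 0$ identically, so the source terms disappear and the system collapses to a closed, homogeneous linear wave system for the pair $(W, h)$. The plan is then to realise the $W$ from Theorem~\ref{thm: n = n}, after re-solving the wave system inside $D(\S)$ with characteristic initial data on $\H$ fixed by its infinite-order jet, as the unique solution of this hyperbolic system; existence and uniqueness with data on the compact lightlike hypersurface $\H$ are exactly what \cite[Thm.~1.6]{Petersen2018} provides. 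Since the pair $(W, 0)$ trivially solves the same homogeneous system with the same (infinite-order vanishing) initial jet for $h$, linearity and uniqueness force $h = \L_W g \equiv 0$ throughout $\H \sqcup D(\S)$, and $W$ is Killing.

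The causal character of $W$ then follows from the horizon geometry. On $\H$, any Killing vector field tangent to $\H$ must be proportional to the nowhere vanishing null generator $V$ (because $\H$ is a non-degenerate Killing horizon and $W$ is tangent), so $W$ is lightlike on $\H$. Since $\kappa \neq 0$, the scalar $g(W,W)$ vanishes along $\H$ with non-zero transverse derivative of a definite sign, positive in the direction pointing into $D(\S)$ and negative on the other side; this yields the stated spacelikeness of $W$ on the $D(\S)$-side of a collar of $\H$ and the timelikeness of any smooth extension across $\H$ to the complement of $\overline{D(\S)}$.

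The principal obstacle is in the second paragraph: one must organise the coupled wave system from Lemma~\ref{base} into a genuinely hyperbolic system to which \cite[Thm.~1.6]{Petersen2018} applies, correctly identify the characteristic initial jet of $(W, h)$ on $\H$ from Theorem~\ref{thm: n = n}, and verify that the vacuum condition is really what closes the system by killing $\L_W \Ric$. Once this is in place, the appeal to \cite[Thm.~1.6]{Petersen2018} replaces the Cauchy–Kovalevskaya step of the analytic arguments in \cite{MoncriefIsenberg1983, MoncriefIsenberg2008} and the characteristic-IVP extension argument of \cite{FRW1998}, allowing one to work in the smooth category without any assumption on the structure of the generators.
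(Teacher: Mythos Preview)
Your overall architecture---Theorem~\ref{thm: n = n} plus \cite[Thm.~1.6]{Petersen2018}---matches the paper, but the step you flag as ``the principal obstacle'' is a genuine gap, and the paper resolves it differently. The identities of Lemma~\ref{baseVac} hold for \emph{every} smooth vector field in vacuum; they are not by themselves a determined PDE system for $W$. If instead you treat $h$ as an independent unknown and use the first identity to substitute $\Box W = -\div\big(h-\tfrac12\tr(h)g\big)^\sharp$ into the term $\L_{\Box W}g$ of the second, the resulting equation for $h$ has principal part equal to (minus twice) the linearised Ricci operator, which is not hyperbolic without a gauge choice. So there is no ``closed homogeneous linear wave system for the pair $(W,h)$'' to which \cite[Thm.~1.6]{Petersen2018} applies, and your comparison with the pair $(W,0)$ also breaks: $(W,0)$ solves the first equation only if $\Box W=0$, which has not been arranged.

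The paper's fix is to decouple. One first solves the single wave equation $\Box W=0$ on $\H\sqcup D(\S)$ with asymptotic data $V$ on $\H$ via \cite[Thm.~1.6]{Petersen2018}, obtaining $W$ with $\n^m W|_\H=\n^m V|_\H$ for all $m$; this is consistent because the induction in Lemmas~\ref{le: n = 0} and \ref{le: induction_step} also yields $(\n_t)^m\Box V|_\H=0$ for all $m$. With $\Box W=0$ in hand, the second identity of Lemma~\ref{baseVac} becomes the genuinely hyperbolic equation $\Box\L_W g-2\Riem(\L_W g)=0$ for $\L_W g$ alone. Since $\n^m\L_W g|_\H=\n^m\L_V g|_\H=0$ by Theorem~\ref{thm: n = n}, uniqueness (\cite[Cor.~1.8]{Petersen2018}) forces $\L_W g\equiv 0$. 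The point you are missing is precisely this gauge choice $\Box W=0$, which turns the second identity into a stand-alone wave equation.

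A minor point on the causal character: you neither need nor can easily justify the claim that any Killing field tangent to $\H$ is proportional to $V$. The construction already gives $W|_\H=V|_\H$, so $W$ is null on $\H$, and one simply computes $\d_t g(W,W)|_{t=0}=2g(\n_V\d_t,V)|_{t=0}=-2g(\d_t,\n_V V)|_{t=0}=2$ using $\n_V V=V$ and Lemma~\ref{le: metric components}.
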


Theorem \ref{thm: existence Killing field} is proven in Section \ref{sec: existence Killing field}.
In fact, it actually is not necessary to assume that $\H$ is smooth and totally geodesic, as this is automatic by the following theorem by combining the work of Hawking \cite{Hawking1972} (see also \cite{HawkingEllis1973}), Larsson \cite{Larsson2014} and by Minguzzi \cite{Minguzzi2014,Minguzzi2015}:

\begin{thm}[Hawking \& Larsson \& Minguzzi]\label{Hawking-Larson-Minguzzi}
Assume that $(M, g)$ is a spacetime satisfying the null energy condition, i.e.~$\Ric(L, L) \geq 0$ for all lightlike vectors $L$ on $M$.
Let $\S \subset M$ be a closed acausal topological hypersurface and let $\H$ denote the past or future Cauchy horizon of $\S$.
If $\H$ is compact, then it is smooth and totally geodesic.
\end{thm}

Note that in applying \cite[Thm.~1.6]{Petersen2018} to solve wave equations with initial data given on a compact Cauchy horizon---as also indicated by  \cite[Counterexample~2.5]{Petersen2018}---the full asymptotic expansion of the candidate Killing vector field has to be used. 
In this respect our approach is different from the one by Alexakis, Ionescu and Klainerman in \cite{AIK2010}, where such an asymptotic expansion was not needed. 
Note, however, that the domains of existence of the solutions are also significantly different. 

There is yet another remarkable result by Isenberg and Moncrief which has immediate relevance to our results.
They proved in \cite{IsenbergMoncrief1992} that if there exists a non-trivial Killing vector field in a maximal globally hyperbolic vacuum development and the generators of the associated compact Cauchy horizon are \emph{non-closed}, then there must exist another non-trivial Killing symmetry. 
By combining \cite[Thm. 3]{IsenbergMoncrief1992} with our results, the following corollary---its proof is given at the end of Section \ref{sec: existence Killing field}---can be seen to hold.

\begin{cor}[Non-closed generators] \label{cor: multiple fields}
Let $(M, g)$ be a Ricci-flat spacetime containing a closed acausal topological hypersurface $\S$ and let $\H$ denote the past or future Cauchy horizon of $\S$.
Assume that $\H$ is compact, smooth and totally geodesic, and that the surface gravity can be normalised to a non-zero constant.
Assume further that at least one generator of $\H$ does not close and that $D(\S)$ is a maximal globally hyperbolic development.
Then there exist (at least) two distinct Killing vector fields on $D(\S)$, in fact the isometry group of $D(\S)$ must have an $S^1 \times S^1$ subgroup.
\end{cor}

For simplicity and definiteness, and also because of the novelty of the applied technical elements, in this paper only the vacuum problem is treated. 
Note, however, that the results by Moncrief-Isenberg and Friedrich-Rácz-Wald could be generalised in \cite{Racz2000} (see also \cite{Racz1999,Racz2001}) to various coupled gravity matter models. 
Note also that in such a circumstance not only the invariance of the metric but also the invariance of the matter field variables has to be demonstrated. 
Nevertheless, as the techniques applied by the second author in \cite{Racz2000} are analogous to those applicable in the pure vacuum case we strongly believe that our new results will also generalise to the inclusion of various matter models. Whether these expectations are valid remains to be investigated. 

Let us finally mention that after the present paper appeared as a preprint, the first author proved in \cite{P2019} that the Killing vector field in Theorem \ref{thm: existence Killing field} extends beyond the horizon as well.
The results in \cite{P2019} heavily rely on Theorem \ref{thm: existence Killing field}.

This paper is structured as follows.
Section 2 is to introduce the setup and prove Theorem \ref{thm: n = n}. 
The proof of the main result of this paper, Theorem \ref{thm: existence Killing field},---which is obtained by a combination of Theorem \ref{thm: n = n} and \cite[Thm. 1.6]{Petersen2018}---is included in Section \ref{sec: existence Killing field}. 
The derivation of the key identity is given in the Appendix. 
This identity is applied in proving Lemmas \ref{le: crucial equations} and \ref{baseVac} but they are proven in a general setting.

\section{Existence of an asymptotic Killing vector field} \label{sec: asymptoticKilling}

The ultimate goal of this section is to prove Theorem \ref{thm: n = n}. 
We assume in this section that $(M,g)$ is a spacetime and that $\H \subset M$ is a smooth, compact, totally geodesic Cauchy horizon with surface gravity that can be normalised to a non-zero constant.
For definiteness we assume that $\H$ is the past Cauchy horizon, the other case then follows by a time reversal.

For any subset $N \subset M$ and any vector bundle $F \to M$, we denote the space of smooth sections in $F$ defined on $N$ by
\[
	C^\infty(N, F).
\]
Our convention for the Riemannian curvature tensor is 
\[
	R(X, Y, Z, W)
		= g(\n_X \n_Y Z - \n_Y \n_X Z - \n_{[X, Y]}Z, W)
\]
and the Ricci curvature is given by
\[
	\Ric(X, Y)
		= \tr_g\left( R(X, \cdot, \cdot, Y) \right).
\]

\subsection{The null time function} \label{sec: time function}
Since $\kappa$ is a non-zero constant, the simple rescaling $V \mapsto \frac1\kappa V$ implies the rescaling $\kappa \mapsto 1$ in $\n_VV = \kappa V$. Thereby, without loss of generality, we shall assume that there exists a nowhere vanishing lightlike vector field $V$ tangent to $\H$ such that
\begin{equation}\label{kappa}
\n_VV = V
\end{equation}
holds everywhere on $\H$. 
As shown in \cite[Prop.~3.1]{Petersen2018}, a ``null time function'' can be constructed in a future neighbourhood of the past Cauchy horizon $\H$. 
The procedure is outlined below, whereas details can be found in \cite[Prop.~3.1]{Petersen2018}.  
As $\H$ is totally geodesic, it follows from \cite[Thm.~30]{Kupeli1987} that 
\[
	g(\n_X V, Y) = 0
\]
for all $X, Y \in T\H$. 
Therefore, there exists a smooth one-form $\o$ on $\H$ such that
\begin{equation} \label{eq: omega definition}
	\n_X V 
		= \o(X) V
\end{equation}
for all $X \in T\H$.
Note that, in virtue of \eqref{kappa}, $\n_V V = V=\o(V)\, V$ implying that $\omega(V) = 1$. 
Since $\o$ is nowhere vanishing it follows that $\ker(\o)$ is a vector bundle over $H$.
We get the splitting
\[
	T\H = \R V \oplus \ker(\o)\,.
\]
Using time-orientability of $M$, it can then be shown that there is a nowhere vanishing future pointing lighlike vector field $L$ on $\H$ such that $L \perp \ker(\o)$ and $g(L, V) = -1$.
It follows that $L$ is everywhere transverse to $\H$.
We may therefore define a local ``null frame'' $\{L, V, e_2, \hdots, e_n\}$ along $\H$ such that $\{e_2, \hdots, e_n\}$ is an orthonormal frame of $\ker(\o)$, and the metric takes the form
\[
	g|_\H = \begin{pmatrix} 0 & -1 & 0 \\ -1 & 0 & 0 \\ 0 & 0 & \delta_{ij}\end{pmatrix}\,.
\]
It is then shown in \cite[Prop. 3.1]{Petersen2018} that by flowing $\H$ along the lighlike geodesics emanating from $\H$ with tangent $L$ we get a foliation of an open subset of $\H$ in $\H \cup D(\S)$ by hypersurfaces diffeomorphic to $\H$.
More precisely, there exists an open set $U \subseteq \H \cup D(\S)$, with $\H \subset U$, and a unique smooth vector field $\d_t$, such that $\n_{\d_t}\d_t = 0$ and $\d_t|_\H = L$ and an associated smooth ``null time function'' $t:U \to [0,\e)$, such that $\d_t t = 1$ and such that $U$ is diffeomorphic to $[0, \e) \times \H$.
The time function $t:U \to [0,\e)$ is the ``time'' coordinate on $U \cong [0, \e) \times \H$. 
Let us emphasize, however, that we use the notation $\d_t$ even though it is not constructed as a part of a coordinate system.
The $t=const$ level hypersurfaces will be denoted by $\H_t$.
For $t>0$, $\H_t$ are Cauchy surfaces of $D(\S)$, whereas $\H_0 = \H$ is the Cauchy horizon.
In particular, we identify the horizon $\H$ with the set $\{t = 0\}$.

Define the vector field $W$ on $U$ by demanding that
\[
	[W, \d_t] = 0, \quad W|_{t = 0} = V.
\]
The vector field $W$ will indeed be the Killing vector field in Theorem \ref{thm: existence Killing field} (so far only defined near the horizon) and the Killing vector field to infinite order at the horizon as in Theorem \ref{thm: n = n}.
The remaining part of the paper is to prove this (and extend the Killing vector field to the entire globally hyperbolic region, as in Theorem \ref{thm: existence Killing field}).
Extend then the frame $\{e_2, \hdots, e_n\}$ of $\ker(\o)$ by Lie propagating them along $\d_t$, i.e.~by demanding that 
\[
	0 = [\d_t, e_2] = \hdots = [\d_t, e_n]\,.
\]
It follows that $(W, e_2, \hdots, e_n)$ is a local frame for $T\H_t$ for any $t \in [0, \e)$.
In order to express wave equations in terms of the null time function, the following lemma is essential.

\begin{lemma} \label{le: metric components}
Denote by $g_{\a \b} := g(e_\a, e_\b)$ the components of the metric, with respect to the frame  
\[
	\{e_0 := \d_t, e_1 := W, e_2, \hdots, e_n\}
\]
on $U$.
Let $g^{\a\b}$ denote the inverse of $g_{\a \b}$.
Then, for the components of the metric
\begin{align*}
	g_{00} &= g^{11} = 0\,, \\
	g_{01} &= g^{01} = -1\,, \\
	g_{0i} &= g^{1i} = 0\,, \quad i = 2, \hdots, n\,,
\end{align*}
hold. 
Moreover, we also have
\begin{align*}
	g_{11}|_{t = 0}
		&= g^{00}|_{t = 0}
		= 0, \\
	g_{1i}|_{t = 0} 
		&= g^{0i}|_{t = 0} = 0, \quad i = 2, \hdots, n.
\end{align*}
\end{lemma}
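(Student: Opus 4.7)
The plan is to exploit the two structural properties of the chosen frame: $\partial_t$ is parallel along its own flow ($\n_{\d_t}\d_t=0$), while $V, e_2,\ldots,e_n$ are Lie-propagated along $\d_t$, so $[\d_t,V]=[\d_t,e_i]=0$. Combined with the initial values of the metric at $t=0$, these let us propagate the entire null-frame structure off $\H$.

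First I would show that the three components involving index $0$ are $t$-independent. Since $\n_{\d_t}\d_t=0$, one has $\d_t g_{00}=2g(\n_{\d_t}\d_t,\d_t)=0$, and since $\d_t|_{t=0}=L$ is lightlike, $g_{00}\equiv 0$. Next, rewriting $\n_{\d_t}V=\n_V\d_t$ (from $[\d_t,V]=0$) gives
\[
\d_t g_{01}=g(\d_t,\n_V\d_t)=\tfrac12 V g_{00}=0,
\]
so $g_{01}\equiv g(L,V)=-1$. The identical computation with $e_i$ in place of $V$, combined with $L\perp E$ at $t=0$, yields $g_{0i}\equiv 0$. With the first row of $(g_{\a\b})$ equal to $(0,-1,0,\ldots,0)$ everywhere, the relation $g^{\a\gamma}g_{\gamma 0}=\de^\a_0$ immediately delivers $g^{\a 1}=-\de^\a_0$, i.e.\ $g^{01}=-1$, $g^{11}=0$ and $g^{1i}=0$.

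Then I would address the remaining $t=0$ statements. Since $V$ is a null direction tangent to the null hypersurface $\H$, it is automatically $g$-orthogonal to all of $T\H$; in particular $g_{1i}|_{t=0}=g(V,e_i)=0$. Feeding this together with $\de_{ij}=g(e_i,e_j)|_{t=0}$ and the identities just established into $g^{0\gamma}g_{\gamma i}|_{t=0}=0$ forces $g^{0i}|_{t=0}=0$.

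Finally, for the two normal derivatives, I would compute
\[
\d_t g(V,V)=2g(\n_{\d_t}V,V)=2g(\n_V\d_t,V)=2Vg(\d_t,V)-2g(\d_t,\n_V V)=-2g(\d_t,\n_V V),
\]
using $g_{01}\equiv -1$. At $t=0$ the normalisation $\n_V V|_\H=V$ gives $\d_t g_{11}|_{t=0}=-2g(L,V)=2$. For $\d_t g^{00}|_{t=0}$, I would differentiate the identity $g^{0\a}g_{\a 1}=0$, which reduces to $g^{00}=-g_{11}+\sum_i g^{0i}g_{i1}$; at $t=0$ the cross terms vanish because both $g^{0i}|_{t=0}=0$ and $g_{i1}|_{t=0}=0$, so $\d_t g^{00}|_{t=0}=-\d_t g_{11}|_{t=0}=-2$. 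No part of this is a serious obstacle; the only point requiring a little care is the null-hypersurface fact that $V\perp T\H$, used to obtain $g_{1i}|_{t=0}=0$.
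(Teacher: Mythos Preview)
Your proof is correct and follows essentially the same route as the paper: use $\n_{\d_t}\d_t=0$ together with $[\d_t,e_\a]=0$ to show the $g_{0\a}$ are $t$-independent, read off the inverse metric from the resulting first row, and then compute the two normal derivatives directly. The only cosmetic difference is that the paper obtains $\d_t g^{00}|_{t=0}$ via the standard formula $\d_t g^{\a\b}=-g^{\a\gamma}g^{\b\de}\d_t g_{\gamma\de}$ rather than by differentiating $g^{0\a}g_{\a 1}=0$, but both reduce immediately to $-\d_t g_{11}|_{t=0}$.
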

\begin{proof}
Since $\d_t$ is lightlike, $g_{00} = g(\d_t, \d_t) = 0$.
By construction, we also have 
\begin{align*}
	g_{01}|_{t = 0} &= -1\,, \\
	g_{0i}|_{t = 0} &= 0, \quad i = 2, \hdots, n.
\end{align*}
It also follows that for any $\a$, we have
\begin{align*}
	\d_tg_{0\a} 
		&= \d_tg(\d_t, e_\a ) = g(\n_{\d_t}\d_t, e_\a ) + g(\d_t, \n_{\d_t}e_\a ) \\*
		&= g(\d_t, \n_{e_\a} \d_t) = \tfrac12 \,\d_{e_\a}g_{00} = 0\,.
\end{align*}
Accordingly, we have that $g_{01} = -1$ and $g_{0i} = 0$ for $i=2, \hdots, n$, and, in turn, that
\begin{equation}
	g_{\a\b} = 
	\begin{pmatrix}
		0 & - 1 & 0 \\
		-1 & g_{11} & g_{1i} \\
		0 & g_{1i} & g_{ij}
	\end{pmatrix} \Rightarrow g^{\a\b} = 
	\begin{pmatrix}
		g^{00} & - 1 & g^{0i} \\
		-1 & 0 & 0 \\
		g^{0i} & 0 & g^{ij}
	\end{pmatrix}\,.
\end{equation}
This completes the proof of the first part of our assertions.

Since $V$ is lightlike, it also follows that ${g(W, e_i)|_{t = 0} = g(V, e_i|_{t= 0}) = 0}$ for $i = 1, \hdots, n$, which implies
\begin{equation}
	g_{\a\b}|_{t = 0} = 
	\begin{pmatrix}
		0 & - 1 & 0 \\
		-1 & 0 & 0 \\
		0 & 0 & g_{ij}|_{t = 0}
	\end{pmatrix} \Rightarrow g^{\a\b}|_{t = 0} = 
	\begin{pmatrix}
		0 & - 1 & 0 \\
		-1 & 0 & 0 \\
		0 & 0 & g^{ij}|_{t = 0}
	\end{pmatrix},
\end{equation}
for $i, j = 2, \hdots, n$, as claimed.
\end{proof}

\subsection{Properties of the null time function}

A certain curvature assumption at the horizon implies strong restrictions on the vector fields $\d_t$ and $V$ along $\H$.
We use the notation
\[
	\n_t := \n_{\d_t}
\]
where $t$ is the null time function as specified in Section \ref{sec: time function}.

\begin{lemma} \label{le: preparations}
Assume that $\Ric(Y, V)|_{t = 0} = 0$ for any $Y \in T\H$.
Then for any smooth vector field $X$ on $M$ such that ${X|_{t = 0} \in C^\infty(\H, \ker(\o))}$ and such that $[\d_t, X] = 0$, we have
\begin{align*}
	\n_X \d_t|_{t = 0} 
		&= \n_t X|_{t = 0} \in C^\infty(\H, \ker(\o)) \,, \\
	\n_V X|_{t = 0} 
		&= [V, X]|_{t = 0} \in C^\infty(\H, \ker(\o)) \,, \\
	\n_V \d_t|_{t = 0} 
		&= \n_t W|_{t = 0} = - \d_t|_{t = 0}
\end{align*}
and consequently
\begin{align*}
	\d_tg_{11}|_{t = 0} &= 2\,. \\
	\d_tg^{00}|_{t = 0} &= -2\,.
\end{align*}
\end{lemma}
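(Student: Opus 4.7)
The plan is to decompose each of the three covariant derivatives in the null frame $\{\d_t, V, e_2, \ldots, e_n\}$ of Section \ref{sec: time function} and read off its components by contracting against dual frame vectors, using Lemma \ref{le: metric components} together with the totally geodesic relation $\n_X V|_\H = \o(X) V$ for $X \in T\H$ (with $\o(V) = 1$ and $E = \ker \o$).

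The first identity comes almost for free: torsion-freeness and $[\d_t, X] = 0$ give $\n_X \d_t = \n_t X$. To conclude that this vector lies in $E$ at $t = 0$, I will check that its $V$-coefficient vanishes, via $g(\n_X \d_t, \d_t) = \tfrac12 X(g(\d_t, \d_t)) = 0$, and its $\d_t$-coefficient vanishes, via $g(\n_X \d_t, V)|_{t=0} = X(g(\d_t, V)) - g(\d_t, \n_X V)|_{t=0} = 0$, using $g(\d_t, V) \equiv -1$ on $U$ by Lemma \ref{le: metric components} and $\n_X V|_{t=0} = \o(X) V = 0$ for $X|_{t=0} \in E$. No Ricci hypothesis enters this step.

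For the third identity, analogous contractions fix the $V$-coefficient of $\n_V \d_t|_{t=0}$ as $0$ and the $\d_t$-coefficient as $-1$, where the latter follows from $g(\n_V \d_t, V)|_{t=0} = V(g(\d_t, V)) - g(\d_t, \n_V V)|_{t=0} = 0 - (-1) = 1$ (using $\n_V V = V$ on $\H$). For the $E$-coefficients, the totally geodesic condition ensures $\n_V e_j|_{t=0} \in T\H$, so I would decompose $\n_V e_j|_{t=0} = \b_j V + Y_j$ with $Y_j \in E$, obtaining $g(\n_V \d_t, e_j)|_{t=0} = -g(\d_t, \n_V e_j)|_{t=0} = \b_j$, and a short computation using $\o(V) = 1$, $\o(e_j) = 0$ and $\n_{e_j} V|_\H = 0$ yields $\b_j = \o([V, e_j]) = -d\o(V, e_j)|_\H$. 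The main technical step is therefore to evaluate $d\o(V, e_j)|_\H$. From $\n_X V|_\H = \o(X) V$ together with the Ricci identity I would derive the curvature identity
\[
    R(X, Y) V = d\o(X, Y) V, \qquad X, Y \in T\H,
\]
and then contract it into $\Ric(V, e_j)$ using the null-frame metric of Lemma \ref{le: metric components}. Since $R(X, Y) V \in \R V$ for $X, Y \in T\H$ and $g(V, e_k)|_{t=0} = 0$, almost every Riemann contraction vanishes, leaving a single term proportional to $d\o(V, e_j)|_\H$; the hypothesis $\Ric(V, Y)|_{t=0} = 0$ then forces $\b_j = 0$, which simultaneously gives $\n_V \d_t|_{t=0} = -\d_t|_{t=0}$ and the auxiliary statement $\n_V e_j|_{t=0} \in E$.

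The second identity drops out from the above: for $X \in C^\infty(\H, E)$ extended by Lie-propagation, $\n_X V|_{t=0} = \o(X) V = 0$ yields $\n_V X|_{t=0} = [V, X]|_{t=0}$, and expanding $X|_\H = X^j e_j$ via Leibniz gives $\n_V X|_{t=0} = V(X^j) e_j + X^j \n_V e_j|_{t=0} \in E$. The only real obstacle is the null-frame Ricci contraction linking $\Ric(V, e_j)|_{t=0}$ to $d\o(V, e_j)|_\H$; once the curvature identity $R(X, Y) V = d\o(X, Y) V$ is available, this reduces to careful bookkeeping with the Riemann symmetries in the off-diagonal null-frame metric.
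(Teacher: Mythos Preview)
Your proposal is correct and follows essentially the same strategy as the paper: the key step in both is to use the hypothesis $\Ric(Y,V)|_{t=0}=0$ for $Y\in E$ to kill one scalar obstruction, after which the remaining components drop out from $\n_X V|_\H=\o(X)V$ and Lemma~\ref{le: metric components}. The only differences are organisational: the paper proves the second claim first (showing $g(\n_{[X,V]}V,\d_t)|_{t=0}=0$ via a direct expansion of $\Ric(X,V)$) and then reads off the $E$-components of $\n_V\d_t|_{t=0}$ from it, whereas you go in the reverse order and package the same Riemann computation through the identity $R(X,Y)V=d\o(X,Y)V$ on $T\H$, which makes the bookkeeping a bit cleaner but is equivalent in content.
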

\begin{proof}
Recall equation \eqref{eq: omega definition}, which is
\[
	\n_Y V = \o(Y) V
\]
for all $Y \in T\H$, which shows that $Y \in \ker(\o)$ if and only if $\n_Y V = 0$.
This will be used several times in this proof.

Since 
\begin{align*}
	g(\n_X\d_t, \d_t)|_{t = 0} 
		&= \tfrac12 X g(\d_t, \d_t)|_{t = 0} = 0\,, \\
	g(\n_X\d_t, V)|_{t = 0} 
		&= Xg(\d_t, V)|_{t = 0} - g(\d_t, \n_X V)|_{t = 0} = 0\,,
\end{align*}
we conclude by Lemma \ref{le: metric components} that $\n_X \d_t|_{t = 0} \in C^\infty\left(\H, \ker(\o)\right)$, proving the first assertion.

Since $\n_X V|_{t = 0} = 0$, proving the second assertion is equivalent to showing that $[X, V]|_{t = 0} \in C^\infty(\H, \ker(\o))$, or equivalently verifying that ${\n_{[X, V]}V|_{t = 0} = 0}$. 
In doing so notice, first, that since $\H$ is totally geodesic and as $[X, V]|_{t = 0} \in T\H$ it follows that $g(\n_{[X,V]}V, Z)|_{t = 0} = 0$ for all $Z \in T\H$. 
Hence it remains to show that $g(\n_{[X,V]}V, \d_t)|_{t = 0} = 0$ holds as well. 
For this, note first that
\begin{align} 
 \Ric(X, V)|_{t = 0} = -R(X, V, \d_t, V)|_{t = 0} + \sum_{i=2}^n R(X, e_i, e_i, V)|_{t = 0} \label{eq: ric(X,V)}\,,
\end{align}
for an arbitrary orthonormal frame $\{e_2,\dots,e_n\}$ in $\ker(\o)$.
The second term in equation \eqref{eq: ric(X,V)} vanishes, as
\begin{align*}
	R(X, e_i, e_i, V)|_{t = 0} &= - R(X, e_i, V, e_i)|_{t = 0} \\*
		&= - g(\n_X(\n_{e_i}V), e_i)|_{t = 0} + g(\n_{e_i}(\n_X V), e_i)|_{t = 0} \\
		& \qquad + g(\n_{[X, e_i]}V, e_i)|_{t = 0} \\
		&= 0\,,
\end{align*}
where, in the last step, the relations $\n_{e_i} V|_{t = 0} = 0 = \n_X V|_{t = 0}$ and $[X, V]|_{t = 0} \in T\H$ have been used. 
Evaluating the first term in equation \eqref{eq: ric(X,V)} at $t = 0$, we get
\begin{align*}
	- R(X, V, \d_t, V)|_{t = 0} 
		&= R(X, V, V, \d_t)|_{t = 0} \\*
		&= g(\n_X (\n_V V), \d_t)|_{t = 0} - g(\n_V (\n_X V), \d_t)|_{t = 0} \\
		& \quad - g(\n_{[X, V]} V, \d_t)|_{t = 0} \\*
		&= - g(\n_{[X, V]} V, \d_t)|_{t = 0}\,,
\end{align*}
where the relations $\n_V V|_{t = 0} = V|_{t = 0}$ and $\n_X V|_{t = 0} = 0$ have been used. 
Combining this with equation \eqref{eq: ric(X,V)} and using that, by assumption, $\Ric(X, V)|_{t = 0} = 0$ we get 
\[
	g(\n_{[X, V]} V, \d_t)|_{t = 0} = 0\,,
\]
and, in turn, that $[X, V]|_{t = 0} \in C^\infty(\H, \ker(\o))$, proving the second assertion.

To verify the third assertion, since $\d_t|_{t = 0} \perp \ker(\o)$, we get
\[
	g(\n_V\d_t, X)|_{t = 0} 
		= Vg(\d_t, X)|_{t = 0} - g(\d_t, \n_VX)|_{t = 0} 
		= 0\,,
\]
where the second term vanishes, in virtue of the assertion that has just been verified above. 
In addition, we further have
\begin{align*}
	g(\n_V\d_t, \d_t)|_{t = 0}
		&= \frac12 V g(\d_t, \d_t)|_{t = 0} 
		= 0 \,, \\
	g(\n_V\d_t, V)|_{t = 0} 
		&= Vg(\d_t, V)|_{t = 0} - g(\d_t, \n_VV)|_{t = 0} = 1\,.
\end{align*}
In virtue of Lemma \ref{le: metric components} and since $[\d_t, W] = 0$, it follows that $\n_V\d_t|_{t = 0} = - \d_t|_{t = 0} = \n_tW|_{t = 0}$.

Finally, we compute
\begin{align*}
	\d_tg_{11}|_{t = 0} 
		&= \L_{\d_t}g(V, V)|_{t = 0} = 2\,g(\n_V{\d_t}, V)|_{t = 0} = - 2\,g(\d_t, V)|_{t = 0} = 2\,, \\
	\d_tg^{00}|_{t = 0} 
		&= - g^{0\a}g^{0\b}\d_tg_{\a\b}|_{t = 0} 
		= - \d_tg_{11}|_{t = 0} 
		=  -2\,,
\end{align*}
verifying the last assertion.
\end{proof}

\subsection{The coupled wave equations} \label{keyrel}

Raising and lowering of indices will be signified---in non-self explaining situations---by the musical symbols ${}^\sharp$ and ${}_\flat$, respectively.
For any covariant $2$-tensor field $u$ on $M$, the symbol $\Box u$ is defined as
\[
	\Box u_{ab} := - \n^c\n_c u_{ab} \,.
\] 
Accordingly, in any (local) frame $\{e_0, \hdots, e_n\}$, defined on subsets of $M$, the term $\Box u$ is given as
\[
	\Box u = -g^{\a \b}(\n_{e_\a}\n_{e_\b}\,u - \n_{\n_{e_\a}e_\b}\,u) \,.
\]
For any covariant $2$-tensor $u$ the specific contraction $R_{a}{}^c{}_b{}^d \, u_{cd}$ of the Riemann tensor an $u$ will be denoted by $\Riem(u)$ 
\[
	\Riem(u)_{ab} := R_{a}{}^c{}_b{}^d \, u_{cd}\,.
\]
Finally $\div(V)$ will stand for $\n_aV^a$, whereas, for any covariant $2$-tensor $u$, $\div(u)$ will denote the contraction $\n^a u_{ab}$.

\medskip

The key relation, verified by the proof of Lemma~\ref{base} in the appendix, is the following:

\begin{lemma}\label{le: crucial equations}
Let $Z$ be a smooth vector field in $U$.
If $(\n_t)^k\Ric|_{t = 0} = 0$ for all $k \leq m+1$, then
\begin{equation}
	(\n_t)^k\Big(\Box \L_Zg - 2\Riem(\L_Zg) + \L_{\div\left(\L_Zg - \div(Z)g\right)^\sharp}g\Big)|_{t = 0} 
		= 0 \,, \label{eq: dabLg_k} 
\end{equation}
for all $k \leq m$.
\end{lemma}

Analogously, in the vacuum case, the following lemma can also be deduced from Lemma~\ref{base}.

\begin{lemma}\label{baseVac}
Assume that $(M,g)$ is a vacuum spacetime and let $Z$ be a smooth vector field in $U$. Then
	\begin{align}
	\Box\,\L_Zg - 2\,\Riem(\L_Zg) + \L_{\div\left(\L_Zg - \div(Z)g\right)^\sharp}g  
		& = 0 \,, \label{eq: LieVvecVac} \\
	\Box \,Z + \div\big(\L_Z g - \div(Z)\, g\big)^\sharp 
		& = 0 \,. \label{eq: VvecVac}
	\end{align}	
\end{lemma}

The next lemma plays a key role in the proof of Theorem \ref{thm: n = n}. In what follows it will be said that  a linear differential operator $P$ is \emph{differentiating along $\H_t$} if $Pu|_{\H_t}$ only depends on $u|_{\H_t}$ for all sections $u$, or, in other words, if $P$ does not involve $\n_t$-derivatives.

\begin{lemma} \label{le: k_derivative}
Let $Z$ be a smooth vector field in $U$.
For any $k \in \N_0$ and for any $X \in C^\infty(\H, T\H)$
\begin{align}
	(\n_t)^k &\left(\Box \L_Zg - 2\, \Riem(\L_Zg)\right)|_{t = 0} \nonumber \\
		&= 2\, \n_V (\n_t)^{k+1}\L_Zg|_{t = 0} + 2\,(k+1)\,(\n_t)^{k+1}\L_Zg|_{t = 0} \nonumber \\
		&\quad + S_k(\L_Zg|_{t = 0}, \hdots, (\n_t)^k\L_Zg|_{t = 0})\,, \label{eq: k_derivative_1st} \\
	(\n_t)^k &\div(\L_Zg - \div(Z)g)|_{t = 0}(X) \nonumber \\
		&= - (\n_t)^{k+1}\L_Zg|_{t = 0}(V, X) \nonumber \\
		& \quad + T_k(\L_Zg|_{t = 0}, \hdots, (\n_t)^k\L_Zg|_{t = 0})(X) \label{eq: k_derivative_2nd}
\end{align}
hold, where $S_k$ and $T_k$ are linear differential operators differentiating only along $\H$.
\end{lemma}

\begin{proof}
We start by verifying equation \eqref{eq: k_derivative_1st}. 
Note first that $(\n_t)^k(-2\Riem(\L_Zg))$ does not depend on $(\n_t)^{k+1}\L_Zg$.
Hence 
\[
	(\n_t)^k \left( - 2\, \Riem(\L_Zg)\right) = S^1_k(\L_Zg|_{\H_t}, \hdots, (\n_t)^k\L_Zg|_{\H_t})\,,
\] 
for some linear differential operator $S^1_k$ which is differentiating only along $\H_t$.
Let 
\[
	\{e_0 := \d_t, e_1 := W, e_2, \hdots, e_n\}
\]
be the local frame introduced in Section \ref{sec: time function}, where $\{e_2, \hdots, e_n\}$ is a frame for the vector bundle $\ker(\o)$.
In the next step, we will use the fact that
\[
	[\n_t, \n_{e_\a}] = R(\d_t, e_\a) + \n_{[\d_t, e_\a]} = R(\d_t, e_\a)
\]
where $R$ is the curvature tensor on $M$, realized here as a $(2,2)$-tensor. 
Consequently $[\n_t, \n_{e_\a}]$ is an endomorphism (a differential operator of order $0$).
Evaluating $(\n_t)^k \Box \L_Zg$ using this, we get
\begin{align*}
	&(\n_t)^k \Box \L_Zg \\
		&\quad = - (\n_t)^k\left(g^{\a \b}(\n_{e_\a}\n_{e_\b} - \n_{\n_{e_\a}e_\b})\L_Zg\right) \\
		&\quad = - [(\n_t)^k,g^{\a \b}] (\n_{e_\a}\n_{e_\b} - \n_{\n_{e_\a}e_\b})\L_Zg - g^{\a\b}[(\n_t)^k, \n_{e_\a}] \n_{e_\b} \L_Zg \\
		&\qquad - g^{\a\b} \n_{e_\a} [(\n_t)^k, \n_{e_\b}] \L_Zg + g^{\a\b} [(\n_t)^k, \n_{\n_{e_\a} e_\b}] \L_Zg + \Box (\n_t)^k \L_Zg \\
		&\quad = -k\d_t(g^{00})(\n_t)^{k-1}(\n_t)^2\L_Zg + S^2_k(\L_Zg|_{\H_t}, \hdots, (\n_t)^k\L_Zg|_{\H_t}) \\*
		&\quad \qquad + \Box (\n_t)^k \L_Zg \,,
\end{align*}
for some linear differential operator $S^2_k$, which is differentiating only along $\H_t$.
Before evaluating this expression at $t = 0$, note that since $\H$ is totally geodesic, we have that $\n_{e_\a}e_\b|_{t = 0} \in T\H$ for $\a, \b = 1, \hdots, n$.
Evaluating then $(\n_t)^k \Box \L_Zg$ at $t = 0$, in virtue of Lemma~\ref{le: metric components} and Lemma \ref{le: preparations}, we get 
\begin{align*}
	(\n_t)^k \Box \L_Zg|_{t = 0} 
		&= 2\,\n_V(\n_t)^{k+1}\L_Zg|_{t = 0} + 2\, (k+1)\,(\n_t)^{k+1}\L_Zg|_{t = 0} \\
		& \quad + S^3_k(\L_Zg|_{t = 0}, \hdots, (\n_t)^k\L_Zg|_{t = 0})\,,
\end{align*}
for some linear differential operator $S^3_k$ which is only differentiating along $\H$, verifying equation \eqref{eq: k_derivative_1st}. 

In verifying equation \eqref{eq: k_derivative_2nd}, assume that $X$ is Lie propagated along $\d_t$, i.e.~$[\d_t, X] = 0$.
It follows that $X \in T\H_t$ for all $t \in [0, \e)$.
Using $\div(\div(Z)g)(X) = \frac12 X \tr_g(\L_Zg)$ we immediately get that
\[
	(\n_t)^k\div\big(\div(Z)g\big)(X)|_{t = 0} = T_k^1(\L_Zg|_{t = 0}, \hdots, (\n_t)^k\L_Zg|_{t = 0})(X)\,,
\]
for some linear differential operator $T^1_k$ which is only differentiating along $\H$.
Analogously, we also have that
\begin{align*}
	(\n_t)^k\div(\L_Zg)(X) &= (\n_t)^k(g^{\a \b}\n_{e_\a}\L_Zg(e_\b, X)) \\
		&= g^{0 \a}((\n_t)^{k+1}\L_Zg)(e_\a, X) \\
		& \quad + T^2_k(\L_Zg|_{\H_t}, \hdots, (\n_t)^k\L_Zg|_{\H_t})(X)\,,
\end{align*}
for some linear differential operator $T^2_k$ which is only differentiating along $\H_t$.
Evaluating at $t = 0$, in virtue of Lemma \ref{le: metric components}, we get
\begin{align*}
	(\n_t)^k\div(\L_Zg)(X)|_{t = 0} 
		&= - ((\n_t)^{k+1}\L_Zg)(V, X)|_{t = 0} \\
		&\quad + T^2_k|_\H (\L_Zg|_{t = 0}, \hdots, (\n_t)^k\L_Zg|_{t = 0})(X)\,.
\end{align*}
Combining these observations completes the verification of equation \eqref{eq: k_derivative_2nd}.
\end{proof}

\subsection{The key lemma}\label{twolemmas}

The proof of the vanishing of various components of the Killing equation on $\H$ relies heavily on the following observation:

\begin{lemma}\label{le: max_principle}
Assume that $\mathfrak{a}$ is a smooth symmetric $2$-tensor field on $\H$ and $\b$ is a nowhere vanishing function, such that
\begin{equation} \label{eq: a_diff_eq}
	\n_V \mathfrak{a}(X,Y) + \b\, \mathfrak{a}(X,Y) = 0 
\end{equation}
for all $X, Y \in \ker(\o)$.
Then $\mathfrak{a}(X, Y) = 0$ for all $X, Y \in \ker(\o)$.
\end{lemma}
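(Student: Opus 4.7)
The proof will be by a global maximum principle on the compact horizon $\H$. My plan is to convert the tensorial ODE into a scalar ODE for a nonnegative function that encodes the $E\otimes E$–part of $\mathfrak{a}$, and then exploit that at a global maximum the directional derivative along $V$ must vanish. Concretely, I would pick locally a $g$-orthonormal frame $\{e_2,\dots,e_n\}$ of $E$ and set
\[
f \;:=\; \sum_{i,j=2}^n \mathfrak{a}(e_i,e_j)^2.
\]
Because $h:=g|_E$ is positive definite and intrinsic, $f$ is exactly the squared $h$-norm of $\mathfrak{a}|_{E\otimes E}$, so it is independent of the chosen frame and defines a smooth nonnegative function on $\H$.

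The next step is to derive the scalar ODE $V(f)=-2\beta f$. Differentiating $f$ and using the Leibniz rule,
\[
V(f) \;=\; 2\sum_{i,j}\mathfrak{a}(e_i,e_j)\Bigl[(\nabla_V \mathfrak{a})(e_i,e_j) + \mathfrak{a}(\nabla_V e_i,e_j) + \mathfrak{a}(e_i,\nabla_V e_j)\Bigr].
\]
The assumed ODE replaces $(\nabla_V \mathfrak{a})(e_i,e_j)$ by $-\beta\,\mathfrak{a}(e_i,e_j)$, producing the $-2\beta f$ term. For the remaining two terms I would use that $\H$ is totally geodesic, so $\nabla_V e_i \in T\H = \mathbb{R}V \oplus E$ (by Kupeli's theorem, as already invoked in the paper); writing $\nabla_V e_i = c_i V + \sum_k A^{k}{}_i\, e_k$, compatibility of $\nabla$ with $g|_E$ forces $A^{k}{}_i = -A^{i}{}_k$ on the orthonormal frame. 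Contracting this antisymmetric matrix against the symmetric quantity $\sum_j \mathfrak{a}(e_i,e_j)\mathfrak{a}(e_k,e_j)$ gives zero. The remaining $V$-component $c_i V$ only contributes to $\mathfrak{a}(V,\cdot)$, which is outside the $E\otimes E$ part controlled by the hypothesis and so does not affect $f$; this is consistent with $\mathfrak{a}|_{E\otimes E}$ being the genuine object of the statement. The result is the scalar ODE $V(f) + 2\beta f = 0$ on $\H$.

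With this ODE in hand, the max principle closes the argument. Since $\H$ is compact and $f$ is continuous and nonnegative, $f$ attains its maximum at some $p_0 \in \H$. As $p_0$ is a critical point, $(\mathrm{d}f)_{p_0} = 0$, so in particular $V(f)(p_0)=0$. The ODE then gives $2\beta(p_0)f(p_0)=0$, and since $\beta$ is nowhere vanishing we conclude $f(p_0)=0$. But $p_0$ was a global maximum of the nonnegative function $f$, so $f\equiv 0$ on $\H$, which is exactly $\mathfrak{a}(X,Y)=0$ for all $X,Y\in E$.

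The main technical obstacle is the clean reduction to $V(f)+2\beta f=0$: the skew-symmetric cancellation within $E$ is routine once orthonormality is used, but one has to be careful that the $V$-direction part of $\nabla_V e_i$ really does not re-introduce uncontrolled terms — this is the point at which the precise meaning of the hypothesis, as a condition on the $E\otimes E$–part of $\mathfrak{a}$, is used. Once the scalar ODE is isolated, the global compactness of $\H$ does all the work and avoids the need for any argument along individual (possibly non-closed) generators.
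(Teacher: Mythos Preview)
Your overall strategy matches the paper's exactly: form the scalar $f=\mathfrak g(\mathfrak a,\mathfrak a)=\sum_{i,j}\mathfrak a(e_i,e_j)^2$, derive $\d_V f+2\b f=0$, and conclude by the global maximum principle on the compact $\H$. However, there is a genuine gap in your handling of the $V$-component of $\n_V e_i$.

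You write $\n_V e_i=c_i V+\sum_k A^k{}_i\,e_k$ and then assert that the $c_i V$ part ``does not affect $f$''. That is not correct. While $f$ itself depends only on $\mathfrak a|_{E\otimes E}$, its derivative $\d_V f$ does not: after the antisymmetric $E$-part cancels as you describe, the computation leaves the residual term
\[
4\sum_{i,j} c_i\,\mathfrak a(V,e_j)\,\mathfrak a(e_i,e_j),
\]
which involves $\mathfrak a(V,\cdot)$ and is not controlled by the hypothesis \eqref{eq: a_diff_eq}. Nothing you have said forces this to vanish, so you do not actually obtain the clean ODE $\d_V f+2\b f=0$. The paper closes exactly this gap by invoking Lemma~\ref{le: preparations}: under the section's standing assumption $\Ric(\cdot,V)|_{\H}=0$ one has $\n_V e_i\in E$, i.e.\ $c_i=0$, and then only the antisymmetric $E$-part survives and your cancellation goes through verbatim. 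The fix is therefore simply to cite Lemma~\ref{le: preparations} (and hence the ambient Ricci hypothesis) instead of claiming the $V$-component is harmless.
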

\begin{proof}
The proof relies on the fact that $\ker(\o) \subset T\H$ is a Riemannian subbundle. 
Thus, in particular,  $g$ is positive definite on $\ker(\o)$, we let $\mathfrak g$ denote its restriction to $\ker(\o)$.
This induces a positive definite metric  $\mathfrak g$ on the space 
\[
	\ker(\o)^*\otimes_{{sym}} \ker(\o)^* \subset \ker(\o)^*\otimes \ker(\o)^*,
\]
of symmetric $2$-tensor fields  by making use of the inverse of $g$. Using the abstract index notation $\mathfrak g(\mathfrak a, \mathfrak a)$ can be given as 
\[
\mathfrak g(\mathfrak a, \mathfrak a) = g^{ik}g^{jl}\,{\mathfrak a}{}_{ij}{\mathfrak a}{}_{kl}\,,
\]
where the indices run over $2, \hdots, n$.
In any local $g$-orthonormal frame $\{e_2, \hdots, e_n\}$ in $\ker(\o)$, $\mathfrak{g}(\mathfrak{a}, \mathfrak{a})$ can then be expressed as
\[
\mathfrak{g}(\mathfrak{a}, \mathfrak{a})=\sum_{i,j = 2}^n \mathfrak{a}(e_i, e_j)^2\,.
\]
Differentiating this along $V$, we get
\begin{align*}\label{eq: ODE}
	V \mathfrak{g} (\mathfrak{a}, \mathfrak{a}) 
		&= \sum_{i,j = 2}^n V (\mathfrak{a}(e_i, e_j)^2) \\
		&= 2 \,\sum_{i,j = 2}^n (V \mathfrak{a}(e_i, e_j))\,\mathfrak{a}(e_i, e_j) \\
		&= 2\, \sum_{i,j = 2}^n \Big( \n_V \mathfrak{a}(e_i, e_j)\,\mathfrak{a}(e_i, e_j) + \mathfrak{a}(\n_V e_i, e_j)\,\mathfrak{a}(e_i, e_j) \\
		& \qquad \qquad \qquad + \mathfrak{a}(e_i, \n_Ve_j)\,\mathfrak{a}(e_i, e_j) \Big)\,.
\end{align*}
As verified by Lemma \ref{le: preparations} we have $\n_V e_i \in \ker(\o)$ for $i=2,\dots,n$, which implies 
\begin{align*}
	\sum_{i, j = 2}^n \mathfrak{a}(\n_V e_i, e_j)\, \mathfrak{a}(e_i, e_j)
		& = \sum_{i, j,k = 2}^n g(\n_Ve_i, e_k)\, \mathfrak{a}(e_k, e_j)\,\mathfrak{a}(e_i, e_j) \\
		& = - \sum_{i, j,k = 2}^n g(e_i, \n_V e_k)\, \mathfrak{a}(e_k, e_j)\,\mathfrak{a}(e_i, e_j) \\
		& =  - \sum_{i, k = 2}^n \mathfrak{a}(e_k, e_j)\,\mathfrak{a}(\n_V e_k, e_j)\,.
\end{align*}
By combining all the above observations, in virtue of \eqref{eq: a_diff_eq} we get 
\[
	V \mathfrak{g}(\mathfrak{a}, \mathfrak{a}) = 2\,\mathfrak{g}(\n_V\mathfrak{a}, \mathfrak{a}) = - 2\b \,\mathfrak{g}(\mathfrak{a}, \mathfrak{a})\,.
\]
Since $\H$ is compact, the scalar function $\mathfrak{g}(\mathfrak{a}, \mathfrak{a})$ must attain its maximum and minimum.
We necessarily have that $V \mathfrak{g}(\mathfrak{a}, \mathfrak{a}) = 0$ at these locations and since $\b \neq 0$ also that $\mathfrak{g}(\mathfrak{a}, \mathfrak{a}) = 0$.
This implies then that $\mathfrak{g}(\mathfrak{a}, \mathfrak{a}) = 0$ everywhere on $\H$. Finally, as $\mathfrak{g}$ is a positive definite metric on $\ker(\o)^*\otimes_{sym} \ker(\o)^*$ our assertion $\mathfrak{a} = 0$ follows as claimed.
\end{proof}

\subsection{Finishing the proof}\label{finishing}

The proof of Theorem \ref{thm: n = n} will be given by an induction argument.

\medskip

We start by showing that the components $(\n_t)^k\L_Wg(\d_t, \cdot)|_{t = 0}$ can be expressed in terms of lower order derivatives of $\L_Wg$.

\begin{lemma} \label{le: the d_t component}
For any $k \in \N$, we have
\[
	(\n_t)^{k+1} \L_Wg(\d_t, \cdot)|_{t = 0} 
		= K_k(\L_Wg|_{t = 0}, \hdots, (\n_t)^k\L_Wg|_{t = 0})\,,
\]
where $K_k$ is a linear differential operator along $\H$.
\end{lemma}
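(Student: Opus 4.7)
The plan is to reduce the statement to a Leibniz-type expansion by first establishing the (non-obvious) fact that the scalar function $(\L_Vg)(\d_t,X)$ is $\d_t$-independent for every vector field $X$ on $U$ with $[\d_t,X]=0$. Starting from
\[
  (\L_Vg)(\d_t,X) \;=\; V\bigl(g(\d_t,X)\bigr) \,-\, g(\d_t,[V,X]),
\]
which holds because $[V,\d_t]=0$, I would invoke the auxiliary identity $\d_t\,g(\d_t,Y)=g(\d_t,[\d_t,Y])$—itself a direct consequence of $\n_t\d_t=0$, $g(\d_t,\d_t)=0$, metric compatibility and torsion-freeness. Applying it to each term on the right (and using $[\d_t,V]=0$ to commute $\d_t$ past $V$ in the first one) reduces the vanishing to $[\d_t,[V,X]]=0$, which is precisely the Jacobi identity in the presence of $[\d_t,V]=[\d_t,X]=0$. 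Since the frame $\{\d_t,V,e_2,\ldots,e_n\}$ is Lie-propagated along $\d_t$ by construction, this $\d_t$-independence applies to every frame vector; Lemma~\ref{le: metric components} further gives $(\L_Vg)(\d_t,\d_t)=(\L_Vg)(\d_t,V)=0$ identically in $U$, whereas the $e_i$-components are only $\d_t$-independent (and generically non-zero).

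Next I would prove, by induction on $k$ using only $\n_t\d_t=0$, the Leibniz-type identity
\[
  \d_t^{k+1}\!\bigl(T(\d_t,X)\bigr)
  \;=\;
  \sum_{j=0}^{k+1}\binom{k+1}{j}(\n_t^jT)(\d_t,\n_t^{k+1-j}X),
\]
valid for every covariant $2$-tensor $T$ and every $X$ with $[\d_t,X]=0$. Specialising to $T=\L_Vg$ and $X$ in the Lie-propagated frame, the first step makes the left-hand side vanish identically in $U$, so isolating the $j=k+1$ term yields
\[
  (\n_t^{k+1}\L_Vg)(\d_t,X)
  \;=\;
  -\sum_{j=0}^{k}\binom{k+1}{j}(\n_t^j\L_Vg)(\d_t,\n_t^{k+1-j}X).
\]
Restricted to $t=0$, each summand on the right is a pointwise contraction of $(\n_t^j\L_Vg)|_{t=0}$ with the fixed geometric vectors $\d_t|_{t=0}$ and $\n_t^{k+1-j}X|_{t=0}$, hence is a zeroth-order linear operator in the tensors $(\n_t^j\L_Vg)|_{t=0}$ for $j\le k$. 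Decomposing an arbitrary vector at a point of $\H$ in the frame then extends the identity from the frame vectors to every direction, producing the required operator $K_k$.

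The only genuine hurdle I expect is spotting the $\d_t$-independence in the first step; once that is in hand, everything else is mechanical bookkeeping governed by $\n_t\d_t=0$. A pleasant byproduct of this approach is that no curvature hypothesis enters—in particular neither $\Ric|_\H=0$ nor Lemma~\ref{le: preparations} is used—so the reduction is purely kinematical, exploiting only the null time function construction of Section~\ref{sec: time function}.
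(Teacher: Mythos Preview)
Your argument is correct and follows essentially the same route as the paper's: both hinge on the fact that the $(\d_t,\cdot)$-component of $\L_Vg$ is annihilated by repeated $\d_t$-differentiation, and then convert this into a statement about iterated $\n_t$-derivatives modulo lower-order terms. The paper's packaging is a touch more concise---it first shows $\L_{\d_t}g(\d_t,\cdot)=0$ and then commutes $\L_V$ past $(\L_{\d_t})^{k+1}$ via $[\d_t,V]=0$ (rather than unpacking the commutation into the Jacobi identity as you do), and finishes with the one-line remark that $(\L_{\d_t})^{k+1}-(\n_t)^{k+1}$ has order at most $k$ in $\n_t$; your explicit Leibniz expansion is the same computation written out, with the minor bonus of producing $K_k$ in closed form.
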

\begin{proof}
First we show that $(\L_{\d_t})^{k+1}g(\d_t, \cdot) = 0$ for all $k \in \N$. 
To see this we shall use a local frame of the type $\{e_0 := \d_t, e_1 := W, e_2, \hdots, e_n\}$ as in Lemma \ref{le: metric components}. 
Since $[\d_t, e_\alpha] = 0$ for $\alpha = 0, \hdots, n$, it follows that 
\[
	\L_{\d_t}g(\d_t, e_\alpha) = \d_t g(\d_t, e_\alpha) = g(\d_t, \n_t e_\alpha) = \tfrac12\, e_\a g(\d_t, \d_t) = 0
\]
for $\alpha =0, \hdots, n$.
Hence, we also have that 
\[
	(\L_{\d_t})^{k+1}g(\d_t, e_\alpha) = (\d_t)^k\L_{\d_t}g(\d_t, e_\alpha) = 0
\]
for $\a = 0, \hdots, n$ as claimed.
By applying $[\d_t, W] = 0$, we also get 
\begin{align*}
	(\L_{\d_t})^{k+1}\L_Wg (\d_t, e_\a) 
		&= \L_W(\L_{\d_t})^{k+1}g(\d_t, e_\a) \\
		&= W(\L_{\d_t})^{k+1}g(\d_t, e_\a) - (\L_{\d_t})^{k+1}g(\d_t, [W, e_\a]) \\
		&= 0\,,
\end{align*}
for $\a = 0, \hdots, n$, showing that $(\L_{\d_t})^{k+1}\L_Wg(\d_t, \cdot) = 0$ as claimed.
The proof is completed by observing that $(\L_{\d_t})^{k+1} - (\n_t)^{k+1}$ is a linear differential operator of order $k$.
\end{proof}

The first step in our inductive proof the following:

\begin{lemma}{\rm [The case $m = 0$]} \label{le: n = 0}
Assume that $\Ric|_{t = 0} = 0$. 
Then
\begin{align*}
	\L_W g|_{t = 0} 
		&= 0.
\end{align*}
\end{lemma}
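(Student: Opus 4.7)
The plan is to establish the two claims separately. The first, $\L_Vg|_{t=0}=0$, follows from a direct computation in the null frame, while the second, $\Box V|_{t=0}=0$, requires combining the vacuum identities of Lemma~\ref{baseVac}, the transverse-derivative expansion of Lemma~\ref{le: k_derivative}, and the maximum-principle Lemma~\ref{le: max_principle}.

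For the first claim I would evaluate each component of $\L_Vg$ in the frame $\{\d_t,V,e_2,\dots,e_n\}$ from Section~\ref{sec: time function}. On tangential arguments, the identity $\n_XV|_\H=\o(X)V$ (valid because $\H$ is totally geodesic), combined with $\o(V)=1$, $\o|_E=0$ and $g(V,\cdot)|_{T\H}=0$, makes $\L_Vg(X,Y)|_\H=0$ automatic for all $X,Y\in T\H$. For components involving $\d_t$ one uses $\n_V\d_t|_\H=-\d_t|_\H$ from Lemma~\ref{le: preparations}; for example $\L_Vg(\d_t,V)|_\H=g(\n_V\d_t,V)+g(\d_t,\n_VV)=1+(-1)=0$, and the $(\d_t,\d_t)$- and $(\d_t,e_i)$-components vanish analogously.

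For the second claim I would start from the vacuum identity $\Box V=-\div\bigl(\L_Vg-\div(V)g\bigr)^\sharp$ of Lemma~\ref{baseVac}. Since $\L_Vg|_\H=0$ as a full tensor on $T_pM\otimes T_pM$ and $g^{\a\b}|_\H$ has the special form of Lemma~\ref{le: metric components}, the only surviving contribution in $\div(\L_Vg)|_\H$ is the transverse one, giving $\div(\L_Vg)(X)|_\H=-(\n_t\L_Vg)(V,X)|_\H$ for any vector $X$, while $\div(V)|_\H=\tfrac12\tr_g(\L_Vg)|_\H=0$. A direct computation then gives $(\n_t\L_Vg)(V,V)|_\H=0$ (using $\n_tV|_\H=-\d_t|_\H$), and $(\n_t\L_Vg)(V,e_i)|_\H=0$, where the Ricci-flat assumption enters through $R(\d_t,V,V,e_i)|_\H=0$, derived exactly as in the proof of Lemma~\ref{le: preparations}. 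Together with $X(\div V)|_\H=0$ for $X\in T\H$, these force $g(\Box V,V)|_\H=g(\Box V,e_i)|_\H=0$; hence $\Box V|_\H=b\,V$ for some smooth function $b$ on $\H$.

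To show $b\equiv 0$ I would exploit the structural consequence that $\Box V|_\H$ is a multiple of $V$: together with $\n_XV|_\H=0$ for $X\in E$ and $g(V,\cdot)|_{T\H}=0$, this immediately gives $\L_{\Box V}g(X,Y)|_\H=0$ for all $X,Y\in E$. On the other hand, applying Lemma~\ref{le: k_derivative} at $k=0$ to the vacuum identity \eqref{eq: LieVvecVac}, and using $\L_Vg|_\H=0$ to kill $\Riem(\L_Vg)|_\H$ and the lower-order source term, reduces it to
\[
	\L_{\Box V}g|_\H \;=\; 2\,\n_V\mathfrak{a}\,+\,2\,\mathfrak{a}, \qquad \mathfrak{a}:=\n_t\L_Vg|_\H.
\]
Restricting to $X,Y\in E$ yields the homogeneous first-order ODE $\n_V\mathfrak{a}(X,Y)+\mathfrak{a}(X,Y)=0$ along the generators of $\H$, so Lemma~\ref{le: max_principle} (with the nowhere vanishing coefficient $\b=1$) forces $\mathfrak{a}|_{E\times E}=0$. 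Finally
\[
	g(\Box V,\d_t)|_\H \;=\; (\n_t\L_Vg)(V,\d_t)|_\H + \d_t(\div V)|_\H,
\]
where the first summand is zero by Lemma~\ref{le: the d_t component} at $k=0$, and the second equals $\tfrac12\sum_{i,j}g^{ij}|_\H\,\mathfrak{a}(e_i,e_j)=0$; hence $\Box V|_\H=0$. The main obstacle is this last reduction: without the global maximum principle on compact~$\H$, one could only conclude $\Box V|_\H\in\R V$, and the compactness-driven ODE argument carried out here is precisely the base case of the technique that will have to be iterated in the inductive step.
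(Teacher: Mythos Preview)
Your argument for $\L_Vg|_{t=0}=0$ matches the paper's almost exactly, so nothing to add there.

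For $\Box V|_{t=0}=0$, your proof is correct but takes a genuinely different route from the paper. The paper never invokes the maximum principle in the base case: after writing $\Box V|_{t=0}$ in terms of $\n_t\L_Vg(V,\cdot)|_{t=0}$, it computes \emph{all three} components $\n_t\L_Vg(V,\d_t)|_{t=0}$, $\n_t\L_Vg(V,V)|_{t=0}$, and $\n_t\L_Vg(V,X)|_{t=0}$ for $X\in E$ by direct curvature manipulations (the first via Lemma~\ref{le: the d_t component}, the last via the identity $\Ric(V,X)|_{t=0}=0$) and concludes immediately. You instead stop at $\Box V|_\H=bV$, then bring in the \emph{second} wave identity \eqref{eq: LieVvecVac} together with Lemma~\ref{le: k_derivative} and Lemma~\ref{le: max_principle} to force $\n_t\L_Vg|_{E\times E}=0$, and use that to kill $\d_t(\div V)|_\H$ and hence $b$. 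This is heavier machinery than the paper deploys at this stage, but it has the compensating virtue of being a faithful dress rehearsal for the induction step (Lemma~\ref{le: induction_step}), where the maximum principle is genuinely indispensable; the paper's base-case shortcut does not extend to higher $m$.
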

\begin{proof}
Since $\H$ is totally geodesic, it follows that 
\[
	\L_Wg|_{t = 0}(X, Y) = g(\n_XV, Y)|_{t = 0} + g(X, \n_YV)|_{t = 0} = 0,
\]
for all $X, Y \in T\H$.
It therefore remains to show that $\L_Wg|_{t = 0}(\d_t, \cdot) = 0$ as well. 
In doing so note first that by Lemma \ref{le: preparations} we have
\begin{align*}
	\L_W g|_{t = 0}(\d_t,\d_t) 
		&= 2 g(\n_t W, \d_t)|_{t = 0} 
		= - 2 g(\d_t, \d_t)|_{t = 0} 
		= 0 \,, \\
	\L_W g|_{t = 0}(\d_t, V) 
		&= g(\n_t W, V)|_{t = 0} + g(\n_V V, \d_t)|_{t = 0} \\*
		&= g(-\d_t, V)|_{t = 0} + g(V, \d_t)|_{t = 0} = 0 \,,
\end{align*}
and, for any smooth vector field $X$ such that $X|_{t = 0} \in C^\infty(\H, \ker(\o))$, that
\begin{align*}
	\L_Wg|_{t = 0}(\d_t, X) 
		&= g(\n_t W, X)|_{t = 0} + g(\d_t, \n_X V)|_{t = 0} 
		= -g(\d_t, X)|_{t = 0} = 0 \,.
\end{align*}
This completes the verification of $\L_Wg|_{t = 0} = 0$.
\end{proof}

\begin{lemma} \label{le: horizontal}
Assume that $\Ric|_{t = 0} = 0$. 
Then
\begin{align*}
	\left( \div(\L_Wg - \div(W) g \right) (X)|_{t = 0}
		&= 0 \,,
\end{align*}
for all $X \in T\H$.
\end{lemma}
\begin{proof}
By applying equation \eqref{eq: k_derivative_2nd} with $k = 0$ and $Z = W$, we get 
\begin{align*}
	 \div\left(\L_W g - \div(W) g\right)(X)|_{t = 0}
		&= - \n_t \L_Wg (V, X) |_{t = 0}
\end{align*}
for all $X \in T\H$.
By Lemma \ref{le: preparations}, it also follows that
\begin{align*}
	- \n_t \L_Wg(V, V)|_{t = 0}
		&= - 2g(\n^2_{\d_t, V}W, V)|_{t = 0} \\
		&= - 2 R(\d_t, V, V, V)|_{t = 0} - 2g(\n_V \n_t W, V)|_{t = 0} \\
		&\qquad + 2 g(\n_{\n_V\d_t} W, V)|_{t = 0} \\
		&= 2g(\n_V\d_t, V)|_{t = 0} - 2g(\n_tW, V)|_{t = 0} \\
		&= 0\,,
\end{align*}
since $[\d_t, W] = 0$.
Using that $\Ric(V, X)|_{t = 0} = 0$ and Lemma \ref{le: preparations}, we get then for any $X \in \ker(\o)$:
\begin{align*}
	- \n_t\L_W g(V, X)|_{t = 0}
		&= g(\n^2_{\d_t, V}W, X)|_{t = 0} + g(\n^2_{\d_t, X}W, V)|_{t = 0} \\
		&= R(\d_t, V, V, X)|_{t = 0} + R(\d_t, X, V, V)|_{t = 0} \\*
		& \qquad + g(\n^2_{V, \d_t}W, X)|_{t = 0} + g(\n^2_{X, \d_t}W, V)|_{t = 0} \\
		&= \sum_{i = 2}^n R(e_i, V, e_i, X)|_{t = 0} + g(\n_V \n_t W, X)|_{t = 0} \\
		& \qquad - g(\n_{\n_V\d_t}W, X)|_{t = 0} + g(\n_X \n_t W, V)|_{t = 0} \\
		& \qquad - g(\n_{\n_X \d_t} W, V)|_{t = 0} \\
		&= - \sum_{i = 2}^n R(e_i, X, V, e_i)|_{t = 0}  - g(\n_V \d_t, X)|_{t = 0} \\
		& \qquad + g(\n_t V, X)|_{t = 0} - g(\n_X\d_t, V)|_{t = 0} \\
		& \qquad - (\n_X \d_t)g(V, V)|_{t = 0} \\
		&= - \sum_{i = 2}^n g(\n_{e_i} \n_X V - \n_X \n_{e_i}V - \n_{[e_i, X]}V, e_i)|_{t = 0} \\
		&= 0 \,.
\end{align*}
Combining these observations completes the proof.
\end{proof}

We may now proceed with the induction step:

\begin{lemma}{\rm [The induction step]} \label{le: induction_step}
Let $m \in \N$. Assume that $(\n_t)^{k}\Ric|_{t = 0} = 0$ for all $k \leq m$, and that
\begin{align}
	(\n_t)^k \L_W g |_{t = 0} &= 0\,, \label{eq: ind_assumption_1}
\end{align}
for all $k \leq m-1$.
Then 
\begin{align*}
	(\n_t)^m \L_W g |_{t = 0} &= 0\,.
\end{align*}
\end{lemma}

\begin{proof}
Note first that Lemma \ref{le: the d_t component} implies that
\[
	(\n_t)^m \L_Wg|_{t = 0}  (\d_t, \cdot) = 0\,.
\]
It therefore suffices to show the vanishing of the remaining components.

The key equation in the proof is obtained by combining assumptions \eqref{eq: ind_assumption_1} with equations \eqref{eq: dabLg_k} and \eqref{eq: k_derivative_1st}.
We get
\begin{align}
	0 	
		&= 2 \n_V \n_t^k\L_Wg|_{t = 0} + 2k \n_t^k \L_Wg|_{t = 0} \nonumber \\
		&\qquad - (\n_t)^{k-1} \L_{\div \left( \L_Wg - \div(W)g\right)^\sharp}g|_{t = 0}\,, \label{eq: dabLg_n}
\end{align}
for any $1 \leq k \leq m$.

Let us treat the case $m = 1$ separately.
First, equation \eqref{eq: k_derivative_2nd}, with $k = 0$, combined with Lemma \ref{le: horizontal} proves that
\[
	\n_t\L_Wg(V, X)|_{t = 0} = 0 \,,
\]
for any $X \in T\H$.
Lemma \ref{le: horizontal} implies that for any $X, Y \in T\H$, 
\begin{align*}
	\L_{\div \left( \L_Wg - \div(W)g\right)^\sharp}g(X, Y)|_{t = 0}
		&= \n_X \div \left( \L_Wg - \div(W)g\right)(Y)|_{t = 0} \\
		&\qquad + \n_Y \div \left( \L_Wg - \div(W)g\right)(X)|_{t = 0} \\
		&= X \div \left( \L_Wg - \div(W)g\right)(Y)|_{t = 0} \\
		&\qquad - \div \left( \L_Wg - \div(W)g\right)(\n_X Y)|_{t = 0} \\
		&\qquad + Y \div \left( \L_Wg - \div(W)g\right)(X)|_{t = 0} \\
		&\qquad - \div \left( \L_Wg - \div(W)g\right)(\n_Y X)|_{t = 0} \\
		&= 0,
\end{align*}
where we have used that $\n_X Y, \n_Y X \in T\H$, since $\H$ is totally geodesic.
Inserting this into \eqref{eq: dabLg_n}, with $k = 1$, we note that
\[
	\n_V \n_t\L_Wg(X, Y)|_{t = 0} + \n_t \L_Wg(X, Y)|_{t = 0} = 0
\]
for all $X, Y \in T\H$.
By Lemma \ref{le: max_principle}, we conclude that
\[
	\n_t \L_Wg(X, Y)|_{t = 0} = 0
\]
for all $X, Y \in \ker(\o)$.
Hence $\n_t \L_Wg|_{t = 0} = 0$, which is the claim for $m = 1$.

We may now assume that $m \geq 2$.
We first claim that
\begin{equation} \label{eq: claim}
	\n_t^k \div \left( \L_Wg - \div(W)g \right)|_{t = 0} = 0
\end{equation}
for all $k \leq m-1$.
For $k \leq m-1$, this is immediate from \eqref{eq: k_derivative_2nd}.
The idea for $k = m-1$ is to insert $\d_t$ into equation \eqref{eq: dabLg_n}.
Applying the induction assumption and equation \eqref{eq: dabLg_n}, with $k = m-1$, we note that
\[
	(\n_t)^{m-2} \L_{\div \left( \L_Wg - \div(W)g\right)^\sharp}g|_{t = 0} = 0\,.
\]
Inserting $\d_t$, we conclude that
\[
	\n_t^{m-1}\div \left( \L_Wg - \div(W)g\right)|_{t = 0} = 0
\]
as claimed.
Equation \eqref{eq: k_derivative_2nd}, with $k = m-1$, now gives
\[
	\n_t^m\L_Wg|_{t = 0}(V, X) = 0 \,,
\]
for any $X \in T\H$.
Moreover, \eqref{eq: claim} implies together with equation \eqref{eq: dabLg_n}, with $k = m$, that
\[
	\n_V \n_t^m\L_Wg|_{t = 0} + m \n_t^m \L_Wg|_{t = 0} = 0
\]
for all $X, Y \in T\H$.
Lemma \ref{le: max_principle} thus implies that
\[
	\n_t^m\L_Wg|_{t = 0}(X, Y) = 0 \,,
\]
for all $X, Y \in \ker(\o)$.
This completes the proof.
\end{proof}

\begin{proof}[Proof of Theorem \ref{thm: n = n}]
The proof follows by induction using Lemma \ref{le: n = 0} and Lemma \ref{le: induction_step}.
\end{proof}

\section{Existence of a Killing vector field} \label{sec: existence Killing field}

The purpose of this section is to prove our main result, Theorem \ref{thm: existence Killing field}, by combining Theorem \ref{thm: n = n} and \cite[Thm.~1.6]{Petersen2018}.

\begin{proof}[Proof of Theorem \ref{thm: existence Killing field}]
Note first that by Theorem \ref{thm: n = n}, the vector field $W$, defined on the one-sided neighbourhood $U=[0, \e) \times \H$ of $\H$, satisfies
\[
	\n^{m} \L_W g |_\H 
		= 0
\]
for all $m \in \N_0$, and thus, by \eqref{eq: VvecVac} and \eqref{eq: k_derivative_2nd}, also
\begin{equation} \label{eq: asymptotic Box W}
	\n^{m} \Box W|_\H = 0
\end{equation}
for all $m \in \N_0$.
The idea is now to use $W$ as initial data for a characteristic initial value problem, for which well-posedness was proven by the first author in \cite[Thm.~1.6]{Petersen2018}.
The solution to the characteristic initial value problem, which we call $\hat W$, will be shown to be a Killing vector field, which coincides with the locally constructed $W$ on $U$.
The vector field $\hat W$ will thus extend $W$ to the entire globally hyperbolic region, proving Theorem \ref{thm: existence Killing field}.

By Theorem 1.6 of \cite{Petersen2018} with $P = \Box$, $f = 0$ and $w^N = W$, using \eqref{eq: asymptotic Box W},  there exists a unique vector field $\hat W \in C^\infty(\H \cup D(\S))$ such that 
\[
	\Box \hat W 
		= 0
\]
on $\H \cup D(\S)$, and such that
\begin{align}
	\n^m \hat W|_\H 
		&= \n^m W|_\H \label{eq: W V coincide on H}
\end{align}
for all $m \in \N_0$.
Inserting $\Box \hat W = 0$ into \eqref{eq: VvecVac} and \eqref{eq: LieVvecVac}, we get 
\begin{align}
	\Box\L_{\hat W}g - 2\Riem(\L_{\hat W}g) 
		&= 0 \,. \label{eq: wave for LWg W}
\end{align}
Note also that equation \eqref{eq: W V coincide on H}, along with Theorem \ref{thm: n = n}, implies that 
\begin{align*}
	\n^m \L_{\hat W} g|_\H 
		= \n^m \L_Wg|_\H
		= 0\,,
\end{align*}
for all $m \in \N_0$, which in virtue of \cite[Cor. 1.8]{Petersen2018} and \eqref{eq: wave for LWg W} implies that $\L_{\hat W} g = 0$ on $\H \cup D(\S)$. 

We therefore know that $\hat W$ is a Killing vector field on $\H \cup D(\S)$ and would like to show that
\[
	\hat W|_{U} = W,
\]
i.e.\ that $\hat W$ is really extending $W$ to the globally hyperbolic region.
Since $\hat W|_\H = V = W|_\H$ and $[\d_t, W] = 0$, it suffices to show that 
\begin{equation} \label{eq: dt hat W}
	[\d_t, \hat W] 
		= 0.
\end{equation}
By \eqref{eq: W V coincide on H}, we in particular know that $\n_t \hat W|_\H = \n_t W|_\H$ and hence
\[
	\L_{\hat W} \d_t|_{\H}
		= [\d_t, \hat W]|_\H = 0,
\]
so the lightlike vector $\d_t|_\H$ is invariant under the flow of $\hat W$.
Recall now that $\n_{\d_t} \d_t = 0$, i.e.\ the integral curves of $\d_t$ are the geodesics in $U$ emanating from the $\hat W$-invariant vector field $\d_t|_\H$.
Since $\hat W$ is a Killing vector field, these geodesics, and hence $\d_t$, are $\hat W$-invariant, which proves \eqref{eq: dt hat W}.
We conclude that indeed
\[
	\hat W|_U
		= W,
\]
as claimed.
In particular, this shows that $W$ is indeed a Killing vector field.

Finally, by Lemma \ref{le: preparations}, we know that
\[
	\d_t g(W, W)|_\H
		= \d_t g_{11}|_{t = 0}
		= 2,
\]
verifying that $W$ is spacelike in a future neighbourhood of $\H$ and that any smooth extension of $W$ to the complement of $\overline{D(\S)}$ across $\H$, is timelike. 
\end{proof}

We finish by proving Corollary \ref{cor: multiple fields}.

\begin{proof}[Proof of Corollary \ref{cor: multiple fields}]
In the proof of Theorem \ref{thm: n = n}, we see that the Killing vector field $W$ satisfies $[\d_t, W] = 0$ and $W|_\H = V$, which implies that $W$ is tangent to the hypersurfaces $\H_t := \{t\} \times \H$ in $U$. 
In virtue of \cite[Prop.~3.1]{Petersen2018}, the hypersurfaces $\H_t$ are Cauchy surfaces in the maximal globally hyperbolic spacetime $D(\S)$ for any $t \in (0, \e)$.
Accordingly, $W$ is a spacelike Killing vector field on $(-\e, \e) \times \H$, which leaves the individual Cauchy surfaces $\H_t$ invariant for any $t \in (0, \e)$. 
Therefore \cite[Thm. 3]{IsenbergMoncrief1992} can be applied to complete the proof of Corollary \ref{cor: multiple fields}.
\end{proof}

\section*{Appendix}
\renewcommand{\theequation}{A.\arabic{equation}}
\setcounter{equation}{0}
\renewcommand{\thelemma}{A.\arabic{lemma}}
\setcounter{lemma}{0}
\setcounter{remark}{0}
The purpose of this appendix is to verify the key relations applied in our argument. 
Notably, \eqref{eq: dabLg_k}, \eqref{eq: LieVvecVac} and \eqref{eq: VvecVac} are just special cases of certain identities which hold for any sufficiently regular vector field on any differentiable manifold $M$ endowed with a semi-Riemannian metric $g$. 
The signature of $g$ does not play any role so it could be arbitrary. 
Let us also emphasize that the derivations in this appendix make no use of Einstein's equations or any other filed equation.

We use the notation introduced in Subsection\,\ref{keyrel} together with $\Ric^\sharp(Z)$ denoting $\Ric{_a}^bZ_b$ and $\Ric^\sharp(u)$ denoting $\Ric{_a}{^c}u_{cb}$ for any vector field $Z$ and covariant $2$-tensor $u$.

\begin{lemma}\label{base}
For any smooth vector field $Z$ on an $n$-dimensional differentiable manifold $M$ endowed with a semi-Riemannian metric $g$ the following identities hold
\begin{align}
	\Box \,Z + \div(\L_Z g - \div(Z)\, g)^\sharp & = \Ric^\sharp(Z) \label{eq: Vvec} \\ 
	\Box\L_Zg - 2\,\Riem(\L_Zg) - \L_{\Box Z}g  
		& = 2\,\L_Z \Ric  - \L_{\Ric^\sharp\,(Z)}\,g \nonumber \\
		& \quad - 2\,{\rm sym}[\Ric^\sharp(\L_Z\,g)] \,. \label{eq: LieVvec} 
\end{align}	
\end{lemma}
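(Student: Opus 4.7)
The plan is to prove both identities as purely tensorial consequences of the Ricci identity together with the naturality of $\Ric$ under diffeomorphisms; no field equation enters, consistent with the authors' remark that signature and dimension are irrelevant. For \eqref{eq: Vvec} I would compute $\div(\L_V g)$ directly: writing $(\L_V g)_{ab} = \n_a V_b + \n_b V_a$, the first-index divergence is
\[
\n^a(\L_V g)_{ab} = -\Box V_b + \n^a\n_b V_a\,,
\]
by the paper's convention $\Box = -\n^c\n_c$. Commuting the derivatives in the second term via the Ricci identity produces $\n_b \div(V) + \Ric^\sharp(V)_b$, and subtracting $\n^a(\div(V)\,g)_{ab} = \n_b\div(V)$ yields \eqref{eq: Vvec} after rearrangement.

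For \eqref{eq: LieVvec} I would invoke the linearisation of the Ricci operator. Differentiating $\varphi_t^*\Ric(g) = \Ric(\varphi_t^* g)$ along the flow $\varphi_t$ of $V$ at $t = 0$ gives $\L_V\Ric = \Ric'(g)[\L_V g]$, where $\Ric'(g)$ denotes the linearised Ricci operator, which in the $\Box$-convention of the paper reads
\[
2\,\Ric'(g)[h]_{ab} = \Box h_{ab} - \n_a\n_b\tr_g h + \n^c\n_a h_{cb} + \n^c\n_b h_{ca}
\]
for any symmetric $h$. Setting $h = \L_V g$ (so $\tr_g h = 2\,\div(V)$) and commuting $\n^c\n_a = \n_a\n^c + [\n^c,\n_a]$ in the last two terms, I would then use \eqref{eq: Vvec} to substitute $\n^c(\L_V g)_{cb} = -\Box V_b + \n_b\div(V) + \Ric^\sharp(V)_b$. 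The $\n_a\Box V_b$ contributions then assemble into $-\L_{\Box V} g$, the doubled $\n_a\n_b\div(V)$ cancel against $-\n_a\n_b \tr_g h$, and the $\n_a(\Ric^\sharp V)_b$ terms combine into $\L_{\Ric^\sharp(V)} g$; rearrangement reproduces the right hand side of \eqref{eq: LieVvec} modulo the remaining commutator contribution.

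The hard part will be the bookkeeping of the commutator sum $C_{ab} := [\n^c,\n_a](\L_V g)_{cb} + [\n^c,\n_b](\L_V g)_{ca}$, which must organise itself into exactly $-2\,\Riem(\L_V g)_{ab} + 2\,\mathrm{sym}[\Ric^\sharp(\L_V g)]_{ab}$ for the argument to close. Applying the Ricci identity for a covariant $2$-tensor and contracting with $g^{cd}$, one of the two Riemann contractions collapses to a Ricci contraction through the identity $g^{cd} R^e{}_{cda} = -\Ric^e{}_a$, producing $\Ric^\sharp(\L_V g)$, while the other is a genuine Riemann contraction matching $\Riem(\L_V g)$ after invoking the pair-swap symmetry $R_{abcd} = R_{cdab}$. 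Symmetrising in $a \leftrightarrow b$ and collecting signs then completes the proof of \eqref{eq: LieVvec}. This final sign tracking is routine but error-prone, and is the main source of potential slipups in the argument.
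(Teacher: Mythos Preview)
Your argument is correct, and for \eqref{eq: LieVvec} it takes a genuinely different route from the paper. The paper starts from $\Box\L_Vg$ and commutes derivatives \emph{twice} in $-\n^e\n_e\n_a V_b$ to extract $\L_{\Box V}g + 2\,\Riem(\L_Vg)$, leaving a remainder $-2R_{(a|}{}^d\n_dV_{|b)} - 2(\n^eR_{e(ab)}{}^d)V_d$ which is then identified with the right-hand side by computing $\L_V R_{ab}$ directly and invoking the contracted Bianchi identity. Your approach instead imports the $\L_V\Ric$ term for free via naturality, $\L_V\Ric = \Ric'(g)[\L_Vg]$, and uses the standard linearised Ricci formula; only a \emph{single} commutator $[\n^c,\n_a]$ acting on a symmetric $2$-tensor is then needed, and \eqref{eq: Vvec} does the rest. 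This is more conceptual and shorter---the Bianchi identity is hidden inside the naturality statement---at the cost of quoting the variation formula for $\Ric$ as a black box, whereas the paper's computation is fully self-contained. For \eqref{eq: Vvec} the two proofs are essentially identical. Your sign bookkeeping in the commutator step checks out: with the paper's Wald conventions one indeed gets $C_{ab} = 2\,\mathrm{sym}[\Ric^\sharp(\L_Vg)]_{ab} - 2\,\Riem(\L_Vg)_{ab}$, which closes the identity exactly.
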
	
\begin{remark}
	Note that Lemma \ref{le: crucial equations} and Lemma \ref{baseVac} are immediate consequences of Lemma \ref{base}.
	Note also that in virtue of \eqref{eq: Vvec} equation \eqref{eq: LieVvec} could also be written as 
\begin{align*}
\Box\L_Zg - 2\,\Riem(\L_Zg) + &\L_{\div\,(\L_Z g - \div(Z)\, g)^\sharp}g \\
	& = 2\,\L_Z \Ric - 2\,{\rm sym}[\Ric^\sharp(\L_Z\,g)] \,. 
\end{align*}
This equation will play a central role in generalising our result to the case of various coupled gravity--matter systems following the strategy applied in \cite{Racz1999,Racz2001}.
\end{remark}

\begin{proof}
The proof of \eqref{eq: Vvec} and \eqref{eq: LieVvec} is given by straightforward calculations carried out below by making use of explicit index notation. In doing so our conventions follow those of \cite{wald}.

The first identity comes as
\begin{align*}
	\big[\Box \,Z_{\,\flat} &+ \div(\L_Z g - \div(Z)\, g)\big]{}_b \nonumber\\ 
		& = -\n^a\n_a Z_b + \n^a\left[\left( \n_aZ_b+\n_bZ_a\right) -(\n^eZ_e)\, g_{ab} \right] \nonumber \\ 
		& = \left[\n^a\n_b - \n_b\n^a\right]Z_a \nonumber \\
		& =  R_{\,b}{}^a Z_a \nonumber \\
		& = \big[\Ric(Z)\big]{}_b\,.
\end{align*}
The second identity is somewhat more involved but it is also straightforward. Note that by evaluating $\Box\,\L_Zg$ we get first
\begin{align}
	\big[\Box\,\L_Zg \big]{}_{ab} 
		& = -\n^e\n_e\left( \n_aZ_b+\n_bZ_a\right) \nonumber \\
		& = -g^{ef} \left[\n_e\n_f\left( \n_aZ_b+\n_bZ_a\right)  \right] \nonumber \\ 
		& = -g^{ef} \big[\n_e\left(\n_a\n_fZ_b\right) + \n_e (R_{fab}{}^dZ_d) \nonumber \\*
		& \hphantom{= -g^{ef} \big[} +\n_e\left(\n_b\n_fZ_a\right) + \n_e (R_{fba}{}^dZ_d)   \big] \nonumber \\ 
		&  = -g^{ef} \big[\n_a\left(\n_e\n_fZ_b\right) + R_{eaf}{}^d\n_dZ_b + R_{eab}{}^d\n_fZ_d \nonumber \\*
		& \hphantom{= -g^{ef} \big[} + (\n_eR_{fab}{}^d)Z_d+ R_{fab}{}^d(\n_e Z_d) +\n_b\left(\n_e\n_fZ_a\right) \nonumber \\*
		& \hphantom{= -g^{ef} \big[} + R_{ebf}{}^d\n_dZ_a + R_{eba}{}^d\n_fZ_d +  (\n_eR_{fba}{}^d)Z_d \nonumber \\*
		& \hphantom{= -g^{ef} \big[} + R_{fba}{}^d(\n_e Z_d)   \big] \nonumber \\ 
		&  = -2\,\big[\n_{(a|}\left(\n^e\n_eZ_{|b)}\right) + R_{(a|}{}^d\n_dZ_{|b)} - R_{a}{}^e{}_{b}{}^f\left( \n_eZ_f+\n_fZ_e\right) \nonumber \\*
		& \hphantom{= -2\,\big[} + (\n^eR_{e(ab)}{}^d)Z_d \big]  \nonumber \\ 
		& =\big[\,\L_{\Box\, Z}g + 2\,\riem(\L_Z g)\,\big]{}_{ab} - 2\,R_{(a|}{}^d\n_dZ_{|b)} \nonumber \\*
		&\hphantom{= \big[} - 2\,(\n^eR_{e(ab)}{}^d)Z_d\,. \label{eq: AdabLg0}
\end{align}
The proof is completed once the last two terms are put into some more favorable form. In doing so we shall derive first some useful auxiliary relations. For instance, by a straightforward calculation verifies
\begin{align}\label{eq: LRab}
 \L_Z R_{ab} 
 	& = Z^e\n_e R_{ab} + R_{eb}\n_a Z^e+ R_{ae}\n_b Z^e \nonumber \\  
 	& =  Z^e\n_e R_{ab} + \n_a(R_{eb} Z^e)+ \n_b(R_{ae} Z^e) \nonumber \\
 	& \quad - \left[(\n_a R_{eb}) + (\n_b R_{ae}) \right] Z^e \nonumber \\  
 	& =  \left[\n_e R_{ab} - 2\,\n_{(a|} R_{|b)e} \right] Z^e + \big[\,\L_{\Ric^\sharp(Z)}\,g\,\big]{}_{ab}\,.
\end{align}
By making use then the contracted Bianchi identity 
\begin{equation}\label{eq: bianchi}
\n_e R_{dab}{}^e + \n_d R_{ab} - \n_a R_{bd} =0\,,
\end{equation}
and the symmetries of the Riemann tensor, we get 
\begin{equation*}
(\n^e R_{eab}{}^d)Z_d =  (\n_e R_{a}{}^{ed}{}_{b})Z_d = (\n_e R_{dba}{}^{e})Z^d =  \left[\n_b R_{ad} - \n_d R_{ab}\right] Z^d\,,
\end{equation*}
where in the last step \eqref{eq: bianchi} was used. By combining this last relation with \eqref{eq: LRab} gives then 
\begin{align}
2\,(\n^e R_{e(ab)}{}^d)Z_d 
	&= 2\left[\n_{(a|} R_{|b)d} - \n_d R_{ab}\right] Z^d \nonumber\\
	&= \big[\,\L_{\Ric^\sharp(Z)}\,g - \L_Z \Ric\,\big]{}_{ab} - Z^e\n_e R_{ab}\,. \label{eq: intmed}
\end{align}
Noticing finally that 
\begin{equation}\label{eq: aux}
2\,R_{(a|}{}^d\n_dZ_{|b)} = 2\,R_{(a|}{}^d\L_Z\,g_{d|b)} - R_{a}{}^d\n_bZ_{d} - R_{b}{}^d\n_aZ_{d}
\end{equation}
a combination of \eqref{eq: aux} and \eqref{eq: intmed}, in virtue of the first line of \eqref{eq: LRab}, gives then
\begin{align*}
2\,R_{(a|}{}^d\n_dZ_{|b)} &+ 2\,(\n^eR_{f(ab)}{}^d)Z_d \\
	&= \big[\,\L_{\Ric^\sharp(Z)}\,g - 2\,\L_Z \Ric +  2\,{\rm sym}[\Ric^\sharp(\L_Z\,g)]\,\big]{}_{ab}\,,
\end{align*}
which, along with \eqref{eq: AdabLg0}, completes the verification of \eqref{eq: LieVvec}. 
\end{proof}

\begin{remark}
	We would like to emphasise again that the above computation is free of using any sort of field equation concerning the metric or restrictions on its signature. 
	It would be of interest to find various other applications of the identities \eqref{eq: Vvec} and \eqref{eq: LieVvec}.
\end{remark}

\section*{Acknowledgements}
The authors thank the Albert Einstein Institute in Golm for its hospitality. 
This project emerged from discussions we had there. 
The authors are also deeply indebted to Vince Moncrief for illuminating discussions.
We would also like to thank Ettore Minguzzi and Sebastian Gurriaran for helpful suggestions.
OP would like to thank the Berlin Mathematical School (Deutsche Forschungsgemeinschaft grant no.~GSC 14), Sonderforschungsbereich 647 and Schwerpunktprogramm 2026, funded by Deutsche Forschungsgemeinschaft, for financial support.
IR was supported by NKFIH Grant Nos. K-115434, K-142423 and the POLONEZ program of the National Science Centre of Poland which has received funding from the European Union`s Horizon 2020 research and innovation programme under the Marie Sk{\l}odowska-Curie grant agreement No.~665778.


\end{sloppypar}
\end{document}